\newcommand{\dosfilas}[2]{
  \ldelim[{2}{2mm}& #1 &\rdelim]{2}{2mm} \\
  & #2 & &  & &
}
\theoremstyle{plain}
\newtheorem{theorem}{Theorem}[section]
\newtheorem{lemma}[theorem]{Lemma}
\newtheorem{proposition}[theorem]{Proposition}
\newtheorem{corollary}[theorem]{Corollary}
\theoremstyle{definition}
\theoremstyle{remark}
\newtheorem{remark}[theorem]{Remark}
\newtheorem*{remark*}{Remark}
\numberwithin{equation}{section}
\newcommand\D{{\mathcal D}}
\newcommand\C{{\mathcal C}}
\newcommand\F{{\mathcal F}}
\newcommand\I{{\mathcal I}}
\newcommand\U{{\mathcal{U}}}
\newcommand\ZZ{{\mathbb Z}}
\newcommand\NN{{\mathbb N}}
\newcommand\Sh{\mbox{\Large $\mathfrak {s}$}}
\title[Symmetric Krall-Hahn polynomials]
{On difference operators for \\ symmetric Krall-Hahn polynomials}
\author{Antonio J. Dur\'an and Manuel D. de la Iglesia}
\address{A. J. Dur\'an\\
Departamento de An\'{a}lisis Matem\'{a}tico \\
Universidad de Sevilla \\
Apdo (P. O. BOX) 1160\\
41080 Sevilla. Spain.}
\email{duran@us.es}
\address{Manuel D. de la Iglesia\\
Instituto de Matem\'aticas \\
Universidad Nacional Aut\'onoma de México \\
Circuito Exterior, C.U.\\
04510, Ciudad de México. Mexico.}
\email{mdi29@im.unam.mx}
\thanks{Partially supported by MTM2015-65888-C4-1-P (Ministerio de Economía y Competitividad),
FQM-262 (Junta de Andalucía), Feder Funds (European
Union) and PAPIIT-DGAPA-UNAM grant IA102617 (México).}
\subjclass[2010]{33C45, 33E30, 42C05}
\keywords{Orthogonal polynomials. Difference operators and equations. Krall-Hahn polynomials.}
   \date{}
\begin{document}
   \maketitle

\begin{abstract}
The problem of finding measures whose orthogonal polynomials are also eigenfunctions of higher-order difference operators have been recently solved by multiplying the classical discrete measures by suitable polynomials. This problem was raised by Richard Askey in 1991. The case of the Hahn family is specially rich due to the number of parameters involved. These suitable polynomials are in the Hahn case generated from a quartet $F_1,F_2,F_3$ and $F_4$ of finite sets of positive integers. In this paper, we show that the order of the corresponding difference operators can be reduced by assuming certain symmetries on both, the parameters of the Krall-Hahn family and the associated quartet $F_1,F_2,F_3$ and $F_4$.
\end{abstract}

\section{Introduction}
H. L. Krall obtained in 1939 a complete classification of the orthogonal polynomials that are also eigenfunctions of a differential operator of order 4 (\cite{Kr2}). Following his pioneering work, orthogonal polynomials that are also eigenfunctions of higher-order differential operators are usually called Krall polynomials. This terminology can be extended
to finite order difference and $q$-difference operators. Krall polynomials are also called bispectral following
the terminology introduced by Duistermaat and Grünbaum (\cite{DG}; see also \cite{GrH1,GrH3}).
Krall polynomials received increasing interest after the 1980's, when it was proved that if one takes the Laguerre $x^\alpha e^{-x}$ or Jacobi weights $(1-x)^\alpha (1+x)^\beta$, assumes that one or two of the parameters $\alpha$ or $\beta$ are nonnegative integers and adds a linear combination of Dirac deltas and their derivatives at the
endpoints of the orthogonality interval, the associated orthogonal polynomials are eigenfunctions of higher-order differential operators
(\cite{koekoe,koe,koekoe2,L1,L2,GrH1,GrHH,GrY,Plamen1,Plamen2,Zh}). In 1991, at the first years of this renewal interest in bispectral orthogonal polynomials, Richard Askey raised
the problem of finding orthogonal polynomials that are also eigenfunctions of higher-order difference operators (see p. 418 of \cite{BGR})).
Askey's question was open until 2012, when one of us found the first examples of measures whose orthogonal polynomials are also eigenfunctions of higher-order difference operators (see \cite{du0}). The recipe given in \cite{du0} to construct Krall discrete polynomials is the following: multiply the classical discrete measures by suitable polynomials. All the conjectures in \cite{du0} have been proved by the authors in a series of papers (\cite{du1,ddI,ddI2}).
The main tool we have used to prove our results is the $\D$-operator method. $\D$-operator is an abstract concept introduced by one of us in \cite{du1} that has proved to be very useful for generating Krall, Krall discrete, and $q$-Krall families of polynomials (\cite{du1,AD,ddI,ddI1,ddI2,dudh,ddI3}).

One of the most important objects in the $\D$-operator method is certain quasi-Casoratian determinant $\Omega$ associated to the corresponding Krall family. This determinant has the form
\begin{equation}\label{casd1}
\Omega (x)=\det \left(\xi_{x-j,m-j}^lY_l(\theta_{x-j})\right)_{l,j=1}^m,
\end{equation}
where $\theta_n$ is the sequence of eigenvalues of the classical discrete family corresponding to its associated second-order difference operator, $\xi^l_{x,y}$ are certain rational functions of $x$ and $Y_l$ are certain polynomials ($\xi^l_{x,y}$ and $Y_l$ also depending on each classical discrete family).

We display here the Hahn case (see \cite{ddI2}). We consider higher-order difference operators of the form
\begin{equation}\label{doho}
D=\sum_{l=s}^rh_l\Sh _l, \quad s\le r,\quad s,r\in \ZZ,
\end{equation}
where $h_l$ are polynomials and $\Sh_l$ denotes the shift operator $\Sh_l(p)=p(x+l)$. If $h_r,h_s\not =0$, then the order of $D$ is $r-s$.

\begin{theorem}[Corollary 6.3 of \cite{ddI2}]\label{cori1}
Let $\rho_{a,b,N} $ be the Hahn weight (see Section \ref{sectKH} for details). Given a quartet of finite sets $\F=(F_1,F_2,F_3,F_4)$ of positive integers (the empty set is allowed), consider the weight $\rho _{a,b,N}^{\F}$ defined by
\begin{equation}\label{udspmi}
\rho _{a,b,N}^{\F}=\prod_{f\in F_1}(b+N+1+f-x)\prod_{f\in F_2}(x+a+1+f)\prod_{f\in F_3}(N-f-x)\prod_{f\in F_4}(x-f)\rho _{a,b,N}.
\end{equation}
Then the orthogonal polynomials with respect to the measure $\rho _{a,b,N}^{\F}$ (whenever they exist) are eigenfunctions of a higher-order difference operator of the form (\ref{doho}) with $r=-s$ and order
\begin{equation}\label{orderi}
2\left(\sum_{f\in F_1,F_2,F_3,F_4}f-\sum_{i=1}^4\binom{k_i}{2}+1\right),
\end{equation}
where $k_i$ denotes the cardinal of $F_i$.
\end{theorem}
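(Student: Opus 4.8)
The plan is to run the $\D$-operator method of \cite{du1,ddI,ddI2}, which is the engine behind Corollary 6.3 of \cite{ddI2}. Recall that the classical Hahn polynomials $(p_n)_n$ are orthogonal with respect to $\rho_{a,b,N}$ and are eigenfunctions of a second-order difference operator with eigenvalue sequence $\theta_n$ (a quadratic polynomial in $n$). The first step is to produce, for the Hahn algebra, the $\D$-operators that carry the four types of perturbations appearing in \eqref{udspmi}. The Hahn family has a rich transformation structure — the parameter shifts $a\mapsto a+1$ and $b\mapsto b+1$, together with the lattice reflection $x\mapsto N-x$ — and each of these yields a $\D$-operator together with a distinguished family of polynomials $Y_l$ and rational data $\xi^l_{x,y}$ as in \eqref{casd1}. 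One checks that the factor $\prod_{f\in F_1}(b+N+1+f-x)$ is exactly the polynomial perturbation of $\rho_{a,b,N}$ selected, through the $b$-shift $\D$-operator, by the set $F_1=\{f_1<\cdots<f_{k_1}\}$; likewise $\prod_{f\in F_2}(x+a+1+f)$ comes from the $a$-shift, and the $F_3$- and $F_4$-factors from composing these with $x\mapsto N-x$. Thus $\rho_{a,b,N}^{\F}$ is precisely the measure obtained by superposing the four perturbations, which is what makes a quartet (rather than a single set) the natural index.

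The second step is to write the orthogonal polynomials $(q_n)_n$ of $\rho_{a,b,N}^{\F}$ in closed form: they are given by a Christoffel-type ratio of determinants whose nontrivial minor is the quasi-Casoratian $\Omega(x)$ of \eqref{casd1}, the remaining entries being the Hahn polynomials $p_{n+1},\dots,p_{n+m}$ transported by the four $\D$-operators. The key structural fact supplied by the $\D$-operator formalism is that such a determinantal combination of eigenfunctions of the Hahn operator, twisted by $\D$-operators, is again a common eigenfunction of a difference operator $D=\sum_l h_l\Sh_l$ whose coefficients $h_l$ are read off, by a cofactor expansion, from the action of the second-order Hahn operator and its $\D$-deformations on the rows of $\Omega$. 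Two things must be verified here: that the a priori rational coefficients $h_l$ are in fact polynomials (so that $D$ has finite order), and that $r=-s$ — the latter being forced by the reflection $x\mapsto N-x$, which is a symmetry of the Hahn lattice and interchanges the $F_1,F_2$-perturbations with the $F_3,F_4$-ones, making the resulting operator symmetric about $x$.

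The last step is the degree bookkeeping that produces \eqref{orderi}. Since $r=-s$, the order equals $2r$, and $r$ is governed by the degrees of $\Omega(x)$ and of the polynomial coefficients $h_l$: each factor $Y_l(\theta_{x-j})$ in \eqref{casd1} contributes according to $\deg Y_l$ and to $\theta_n$ being quadratic, the rational prefactors $\xi^l_{x-j,m-j}$ contribute controlled zeros and poles, and the $Y_l$ are in the Hahn case products over the $F_i$ with degrees governed by $\sum_{f\in F_i}f$. Carrying out the count — and accounting for the degree collapse of the Casoratian structure when the $f$'s are consecutive, which is exactly what the terms $\sum_i\binom{k_i}{2}$ record — yields $r=\sum_{f\in F_1,F_2,F_3,F_4}f-\sum_{i=1}^4\binom{k_i}{2}+1$, hence the order $2r$ in \eqref{orderi}. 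The main obstacle, and the technical heart of the argument, is precisely this combination of the last two steps: one must show that every coefficient $h_l$ is a polynomial of exactly the predicted degree — no spurious cancellation that would lower the order, no surviving pole that would raise it — and that the four perturbations can be superposed with no interaction beyond what the product in \eqref{udspmi} already encodes. This is where the explicit form of the Hahn $\D$-operators and a careful analysis of the zeros of $\Omega(x)$ are indispensable; the ``whenever they exist'' hypothesis enters only to guarantee $\Omega(x)\neq 0$, so that the ratio defining $q_n$ and the coefficients $h_l$ make sense.
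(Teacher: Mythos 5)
Your outline correctly names the engine (the $\D$-operator method, the quasi-Casoratian $\Omega$ of \eqref{casd1}, and a final degree count), but it stops short of the step that actually constitutes the proof in \cite{ddI2} (and which Section 3 of this paper recapitulates): one must exhibit a concrete rational function $S$ (Lemma 5.2 of \cite{ddI2}) and verify the three hypotheses \eqref{ass0}, \eqref{ass1}, \eqref{ass2} — that $S\Omega$ is a polynomial, that each $M_h$ defined by \eqref{defMh} has the form $\sigma_{x+1}\tilde M_h(\theta_x)$ with $\tilde M_h$ a polynomial, and that there is a polynomial $P_S$ with $P_S(\theta_x)-P_S(\theta_{x-1})=S(x)\Omega(x)+S(x+m)\Omega(x+m)$. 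Only then does Theorem 3.1 of \cite{ddI2} produce the explicit operator of type \eqref{Dq2}, whose order is $2\deg P_S$, and the value \eqref{orderi} comes from computing $\deg(S\Omega)$ exactly. Establishing those hypotheses is where all the analytic content sits: the invariance arguments under the involution $\I^{a+b}$, the pole-cancellation lemma (the analogue of Lemma \ref{lgp2}), and the determinantal degree lemma (Lemma 5.1 of \cite{ddI2}, the analogue of Lemma \ref{lemaP}) that shows the Casorati-type determinant, divided by an explicit product, is a polynomial of exactly the predicted degree. You acknowledge this as ``the technical heart'' but do not supply it, so the proposal is a plan for a proof rather than a proof; asserting that the cofactor coefficients ``are read off'' and are polynomials of the right degree is precisely the claim that needs the cited lemmas.

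Two specific assertions are also off. First, $r=-s$ is not forced by the reflection $x\mapsto N-x$ ``interchanging the $F_1,F_2$-perturbations with the $F_3,F_4$-ones'': the quartet in the theorem is arbitrary and carries no such symmetry. The correct reason is structural: the operator has the form $\tfrac12 P_S(D_H)+\sum_h \tilde M_h(D_H)\,\D_i\, Y_h(D_H)$, a polynomial in the symmetric second-order Hahn operator $D_H$ plus lower-order corrections, and the leading term $P_S(D_H)$ alone fixes $r=-s=\deg P_S$. Second, the polynomials $Y_l$ in \eqref{casd1} are not ``products over the $F_i$ with degrees governed by $\sum_{f\in F_i}f$''; they are dual Hahn polynomials indexed by the involuted sets $I(F_i)$, and the terms $\sum_i\binom{k_i}{2}$ in \eqref{orderi} arise from combining the determinantal degree count (which is expressed in terms of $\sum_{u\in I(F_i)}u$ and binomials in the cardinals of $I(F_i)$) with the identity \eqref{relF} translating back to $\sum_{f\in F_i}f$ — not from a ``degree collapse when the $f$'s are consecutive.'' Without the explicit $S$, the verification of \eqref{ass0}--\eqref{ass2}, and this bookkeeping through the involution $I$, the order formula \eqref{orderi} is not derived.
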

In the Krall-Hahn case, the functions which appear in the quasi-Casoratian determinant $\Omega$ (\ref{casd1}) are: $\theta_x=x(x+\tilde a+\tilde b+1)$, the polynomials $Y_l$ are dual Hahn polynomials and we have four different types of rational functions
\begin{align}\label{xit}
\xi^1_{x,j}&=\displaystyle(-1)^j\frac{(x-j-\tilde N)_j}{(x-j+\tilde a+\tilde b+\tilde N+2)_j},\quad
&\xi^3_{x,j}&=1,\\\nonumber
\xi^2_{x,j}&=
\displaystyle\frac{(x-j+\tilde b+1)_j(x-j-\tilde N)_j}{(x-j+\tilde a+1)_j(x-j+\tilde a+\tilde b+\tilde N+2)_j},\quad
&\xi^4_{x,j}&=\displaystyle (-1)^j\frac{(x-j+\tilde b+1)_j}{(x-j+\tilde a+1)_j}.
\end{align}
The new parameters $\tilde a, \tilde b$ and $\tilde N$ are related to $a,b$ and $N$ by the identities $\tilde a=a+\max F_2+\max F_4 +2$, $\tilde b=b+\max F_1+\max F_3 +2$ and $\tilde N=N-\max F_3-\max F_4 -2$ (see \cite{ddI2}).

The order (\ref{orderi}) of the difference operator for which the Krall-Hahn polynomials are eigenfunctions is related to the quasi-Casoratian determinant $\Omega$ (\ref{casd1}) in the following way. $\Omega $ is a rational function and, roughly speaking, we can remove its zeros and poles by using a rational function $S$, so that $S(x)\Omega (x)= -(2x-m+a+b)P(x)$, where $P$ is a polynomial of degree
$2\left( \sum_{f\in F_1,F_2,F_3,F_4}f-\sum_{i=1}^4\binom{k_i}{2}\right) $. This polynomial $P$ gives the order of the difference operator.

\medskip

Once the problem of constructing Krall discrete polynomials has been solved, the mathematical interest turns out to the minimal order of the associated operators. For the case of differential operators, we know since decades that certain symmetries in the weights may reduce the order of the differential operators. For instance, R. and J. Koekoek found in 2000 (see \cite{koekoe2}) a differential operator for the orthogonal polynomials with respect to the weight
\begin{equation}\label{jacobid}
(1-x)^\alpha (1+x)^\beta +M\delta_{-1}+N\delta_1,\quad \alpha, \beta >-1,
\end{equation}
for which they are eigenfunctions; in the case when $\alpha,\beta\in \NN$ and $M>0,N>0$ this order is
$2\alpha+2\beta +6$.
There are computational evidences showing that, except for special values of the
parameters $\alpha$ and $\beta$ and the numbers $M$ and $N$, $2\alpha+2\beta +6$ seems to be the minimum order of a differential operator having the
orthogonal polynomials with respect to (\ref{jacobid}) as eigenfunctions. However this is not true in
general. For instance, when $\alpha=\beta$ (Gegenbauer polynomials) and $M=N$,
we have that $2\alpha+2\beta +6=4\alpha+6.$ But R. Koekoek found a differential operator of
order $2\alpha +4$ for which these Gegenbauer type orthogonal polynomials are eigenfunctions (see \cite{koe}). This reduction of the order of the differential operators under symmetries of the weight also appears in the wider context of Jacobi-Sobolev orthogonal polynomials (see \cite{ddI3}).

The purpose of this paper is to show a similar result for Krall-Hahn polynomials. In this case, we consider the symmetries $a=b$, $F_1=F_2$ and $F_3=F_4$. According to Theorem \ref{cori1} above, there exists a difference operator of order
$
2\left(2\sum_{f\in F_1,F_3}f-2\binom{k_1}{2}-2\binom{k_3}{2}+1\right),
$
having as eigenfunctions the orthogonal polynomials with respect to $\rho _{a,a,N}^{F_1,F_1,F_3,F_3}$. In this paper, we prove that this order can be reduced to
\begin{equation}\label{orderii}
2\left(\sum_{f\in F_1,F_3}f-\binom{k_1}{2}-\binom{k_3}{2}+1\right).
\end{equation}
The contents of this paper are as follows. In Section 2, we prove that the symmetry assumptions imply a surprising factorization of the quasi-Casoratian determinant $\Omega$ (\ref{casd1}). In Section 3, using again the the $\D$-operator approach, we show that this factorization allows a better
choice for the rational function $S$, in the sense that one can construct from
the new $S$ a difference operator of order (\ref{orderii}) having the Krall-Hahn polynomials as eigenfunctions.
Section 4 is devoted to display some examples.

\section{Preliminaries}
As mentioned in the Introduction, our results involve certain nice factorization of the quasi-Casoratian determinant $\Omega$ (\ref{casd1}). This determinant is one of the main characters of the $\D$-operator method. So in this Section, we introduce this quasi-Casoratian determinant and prove its factorization.

We will use the following notation. Given a finite set of positive integers $F=\{f_1,\ldots, f_m\}$ (where $f_i\not=f_j$, $i\not =j$), the expression
\begin{equation*}\label{defdosf}
  \begin{array}{@{}c@{}lccc@{}c@{}}
    & &&\hspace{-1.3cm}{}_{j=1,\ldots , m} \\
    \dosfilas{ z_{f,j} }{f\in F}
  \end{array}
\end{equation*}
inside a matrix or a determinant denotes the submatrix defined by
$$
\begin{pmatrix}
z_{f_1,1}&z_{f_1,2}&\cdots&z_{f_1,m}\\
\vdots&\vdots&\ddots&\vdots\\
z_{f_m,1}&z_{f_m,2}&\cdots&z_{f_m,m}
\end{pmatrix}.
$$

Let $U$ and $V$ be two finite sets of (distinct) nonnegative integers. Denote by $m_1$ the cardinal of $U$, $m_3$ the cardinal of $V$ and $m=2(m_1+m_3)$. Consider $Y_u, u\in U$ and $Z_u, u\in V$ arbitrary polynomials. Let $\xi_{x,j}^h, h=1,3$ be  auxiliary functions and $\theta_x$ a polynomial in $x$. The quasi-Casoratian determinant is then defined by
\begin{align}\label{defom3}
\Omega(x)&=  \left|
  \begin{array}{@{}c@{}lccc@{}c@{}}
    & &&\hspace{-1.9cm}{}_{j=1,\ldots , m} \\
    \dosfilas{ \xi_{x-j,m-j}^1Y_{u}(\theta_{x-j}) }{u\in U} \\
    \dosfilas{(-1)^j \xi_{x-j,m-j}^1Y_{u}(\theta_{x-j})}{u\in U}
    \\
    \dosfilas{ \xi_{x-j,m-j}^3Z_{u}(\theta_{x-j})}{u\in V}\\
    \dosfilas{(-1)^j \xi_{x-j,m-j}^3Z_{u}(\theta_{x-j})}{u\in V}
  \end{array}\hspace{-.6cm}\right| .
\end{align}
For the Krall-Hahn case, $Y_u, u\in U$ and $Z_u, u\in V$ are dual Hahn polynomials, $\xi_{x,j}^h, h=1,3$ are certain rational functions (see (\ref{xis})) and $\theta_x=x(x+a+b+1)$ (the eigenvalues of the second-order difference operator for the Hahn polynomials). However, the factorization of $\Omega$ does not depend on these particular choices.

The first result we prove is that we can factorize $\Omega(x)$ into two determinants of smaller size. Consider the following two determinants associated with $\Omega(x)$:
\begin{align}\label{Om1}
\Omega_1(x)&=  \left|
  \begin{array}{@{}c@{}lccc@{}c@{}}
    & &&\hspace{-2.5cm}{}_{j=1,\ldots , m_1+m_3} \\
    \dosfilas{ \xi_{x-2j+1,m-2j+1}^1Y_{u}(\theta_{x-2j+1}) }{u\in U} \\
    \dosfilas{ \xi_{x-2j+1,m-2j+1}^3Z_{u}(\theta_{x-2j+1})}{u\in V}
  \end{array}\hspace{-.6cm}\right|,
\Omega_2(x)=  \left|
  \begin{array}{@{}c@{}lccc@{}c@{}}
    & &&\hspace{-2.5cm}{}_{j=1,\ldots , m_1+m_3} \\
    \dosfilas{ \xi_{x-2j,m-2j}^1Y_{u}(\theta_{x-2j}) }{u\in U} \\
    \dosfilas{ \xi_{x-2j,m-2j}^3Z_{u}(\theta_{x-2j})}{u\in V}
  \end{array}\hspace{-.6cm}\right|.
\end{align}
Therefore we have the following:
\begin{proposition}\label{Prop1}
Let $\Omega(x)$ be the quasi-Casoratian determinant of size $m\times m$ defined by \eqref{defom3} and $\Omega_1(x),\Omega_2(x)$ the quasi-Casoratian determinants of size $(m_1+m_3)\times (m_1+m_3)$ defined by \eqref{Om1}, for any polynomials $Y_u$, $u\in U$ and  $Z_u$, $u\in V$. Assume that either $\Omega_1(x)\neq0$ or $\Omega_2(x)\neq0$. Then we have the following factorization
\begin{equation}\label{FacOm}
\Omega(x)=(-1)^{\binom{m_1}{2}+\binom{m_3}{2}}2^{m_1+m_3}\Omega_1(x)\Omega_2(x).
\end{equation}
\end{proposition}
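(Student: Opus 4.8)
The plan is to exhibit an explicit sequence of row and column operations that block-diagonalizes $\Omega(x)$, so that the determinant factors as the product of the two $(m_1+m_3)\times(m_1+m_3)$ blocks $\Omega_1$ and $\Omega_2$, up to the sign and power of $2$ in \eqref{FacOm}. The key structural observation is that $\Omega(x)$ is built from $2(m_1+m_3)$ pairs of rows, where each pair has the form $\bigl(v_j\bigr)_j$ and $\bigl((-1)^j v_j\bigr)_j$ for the same entries $v_j = \xi^h_{x-j,m-j}Y_u(\theta_{x-j})$. For such a pair, adding and subtracting produces rows supported only on the even-indexed columns or only on the odd-indexed columns: $v_j + (-1)^j v_j = 2v_j$ if $j$ is even and $0$ if $j$ is odd, while $v_j - (-1)^j v_j = 2v_j$ if $j$ is odd and $0$ if $j$ is even.

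First I would perform, for each of the $m_1+m_3$ row-pairs, the operations $R_{\text{new},1} = R_1 + R_2$ and $R_{\text{new},2} = R_1 - R_2$; this is a determinant-preserving composition only if done carefully, but in fact the map $(R_1,R_2)\mapsto(R_1+R_2,R_1-R_2)$ has determinant $-2$ on each $2$-dimensional block, contributing a factor $(-2)^{m_1+m_3}$ overall (I will track this constant and reconcile it with the claimed $2^{m_1+m_3}$ and sign at the end). After these operations, each former row-pair has been replaced by one row supported on even columns (with entries $2v_j$, $j$ even) and one row supported on odd columns (entries $2v_j$, $j$ odd). Next I would permute the $m$ rows so that all the "even-column" rows come first and all the "odd-column" rows come last, and likewise permute the $m$ columns so that the even-indexed columns $j=2,4,\ldots,m$ come first and the odd-indexed columns $j=1,3,\ldots,m-1$ come last. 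This brings $\Omega(x)$ (times the accumulated constants) into block-diagonal form $\begin{pmatrix}A & 0\\ 0 & B\end{pmatrix}$.

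Then I would identify the two diagonal blocks with $\Omega_1$ and $\Omega_2$. Writing $j=2i$ for the even columns gives entries $2\,\xi^h_{x-2i,\,m-2i}Y_u(\theta_{x-2i})$, $i=1,\ldots,m_1+m_3$, which is exactly $2^{m_1+m_3}$ times a matrix whose determinant is $\Omega_2(x)$ (pulling out one factor $2$ per column, i.e.\ $2^{m_1+m_3}$ from that block); writing $j=2i-1$ for the odd columns gives entries $2\,\xi^h_{x-2i+1,\,m-2i+1}Y_u(\theta_{x-2i+1})$, which similarly yields $2^{m_1+m_3}\Omega_1(x)$ after pulling $2$'s. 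I must also track that only the $U$-type rows and $V$-type rows survive in each block in the order $U$ then $V$, matching the layout in \eqref{Om1}. The two row permutations and two column permutations each contribute a sign; the column permutation that interleaves/deinterleaves $m$ indices contributes a standard sign $(-1)^{\binom{m}{2}/?}$-type factor, and here is where I expect the bookkeeping to be delicate.

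The main obstacle will be the careful accounting of all the signs and powers of $2$: there are four sources of $2$'s or signs (the $\pm$ row combinations on each pair, the two factors of $2$ extracted per column in each block, the row-sorting permutation, and the column-sorting permutation), and they must be shown to collapse to exactly $(-1)^{\binom{m_1}{2}+\binom{m_3}{2}}2^{m_1+m_3}$. I would organize this by treating a small case ($m_1=1$, $m_3=0$, so $m=2$) to fix conventions, then argue the general parity count for the interleaving permutation combinatorially (the sign of the permutation sorting $(2,4,\ldots,2k,1,3,\ldots,2k-1)$ is $(-1)^{\binom{k}{2}}$ with $k=m_1+m_3$, and an analogous count applies to the row sort, which together with the internal ordering of $U$ versus $V$ rows produces the $\binom{m_1}{2}+\binom{m_3}{2}$ exponent). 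The nondegeneracy hypothesis (either $\Omega_1\neq 0$ or $\Omega_2\neq 0$) is only needed so that \eqref{FacOm} is a genuine identity of rational functions rather than $0=0$, and plays no role in the algebraic manipulation itself.
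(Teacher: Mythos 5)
Your proposal is correct in substance, and at the decisive step it takes a genuinely different route from the paper. The paper also sorts odd versus even columns and permutes the row blocks, but it then writes $\Omega$ as $(-1)^{\binom{m_1}{2}+\binom{m_3}{2}}$ times the block determinant $\det\left(\begin{smallmatrix}A&B\\-A&B\end{smallmatrix}\right)$ with $|A|=\Omega_1$, $|B|=\Omega_2$, and evaluates it by the Schur--complement formula $|C_{11}||C_{22}-C_{21}C_{11}^{-1}C_{12}|$; that is the only reason the hypothesis ``$\Omega_1\neq0$ or $\Omega_2\neq0$'' appears. Your sum/difference row operations followed by sorting into block--diagonal form replace the Schur complement by elementary operations, so your argument proves \eqref{FacOm} unconditionally (the nondegeneracy hypothesis becomes superfluous, not merely ``only needed to avoid $0=0$''); the price is that the whole constant is spread over four bookkeeping items, whereas in the paper it sits in one permutation-parity count plus the factor $|2B|=2^{m_1+m_3}|B|$. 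Your bookkeeping does close, but two of your stated counts need fixing. First, the matrix has $m_1+m_3$ row pairs (you wrote $2(m_1+m_3)$ pairs), and the members of a pair are rows $i$ and $i+m_1$ (respectively $2m_1+i$ and $2m_1+m_3+i$), not adjacent rows; this is what determines the row-sort parity. Second, putting the \emph{even} columns first has sign $(-1)^{\binom{k+1}{2}}$ with $k=m_1+m_3$, not $(-1)^{\binom{k}{2}}$; the latter is the sign for putting the \emph{odd} columns first, which is also the convention matching \eqref{Om1} (the $\Omega_1$-block on top). With odd columns first, the tally is: row operations contribute $(-2)^{k}$, the row sort bringing the ``difference'' rows (in the order $U$ then $V$) above the ``sum'' rows contributes $(-1)^{m_1^2+m_1m_3+m_3^2}$, the column sort contributes $(-1)^{\binom{k}{2}}$, and extracting the factor $2$ from each row of the two diagonal blocks contributes $2^{2k}$; since $\binom{k}{2}=\binom{m_1}{2}+\binom{m_3}{2}+m_1m_3$ and $m_i+m_i^2$ is even, the total is exactly $(-1)^{\binom{m_1}{2}+\binom{m_3}{2}}2^{m_1+m_3}$, as claimed. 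So the approach is sound and slightly more general than the paper's; it just needs this corrected parity accounting written out (your idea of first fixing conventions on the case $m_1=1$, $m_3=0$ is a reasonable way to catch the evens-first/odds-first slip).
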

\begin{proof}
We make a double set of interchanges of rows and columns. First, we interchange odd columns with even columns. For that we need $\binom{m_1+m_3}{2}$ changes. Second, we interchange the second block of rows of $\Omega(x)$ with the third block of rows. For that we need $m_1(m_1+m_3-1)$ changes. The parity of the sum is the same as the parity of
$
\binom{m_1+m_3}{2}-m_1(m_1+m_3-1)=\binom{m_3}{2}-\binom{m_1}{2},
$
which at the same time has the same parity as $\binom{m_3}{2}+\binom{m_1}{2}$. Therefore, we can write the determinant $\Omega(x)$ as a $2\times2$ block matrix of the form
$$
\Omega(x)=(-1)^{\binom{m_1}{2}+\binom{m_3}{2}}\begin{vmatrix}
A&B\\-A&B
\end{vmatrix},
$$
where $|A|=\Omega_1(x)$ and $|B|=\Omega_2(x)$. Then, using the very well known formula
$$
\begin{vmatrix}
C_{11}&C_{12}\\C_{21}&C_{22}
\end{vmatrix}=|C_{11}||C_{22}-C_{21}C_{11}^{-1}C_{12}|=|C_{22}||C_{11}-C_{12}C_{22}^{-1}C_{21}|,
$$
we have
$
\Omega(x)=(-1)^{\binom{m_1}{2}+\binom{m_3}{2}}|A||2B|=(-1)^{\binom{m_1}{2}+\binom{m_3}{2}}2^{m_1+m_3}\Omega_1(x)\Omega_2(x),
$
since we are assuming that either $|A|\neq0$ or $|B|\neq0$.
\end{proof}

\begin{corollary}\label{coro1}
Assume in the previous Proposition that the coefficients $\xi_{x,j}^h, h=1,3$ satisfy
\begin{equation}\label{condx}
\xi_{x-i,m-i}^h=\xi_{x-i,l}^h\xi_{x-i-l,m-i-l}^h,\quad h=1,3.
\end{equation}
Then
\begin{equation*}\label{FacOmC}
\Omega(x)=\frac{(-1)^{\binom{m_1}{2}+\binom{m_3}{2}}2^{m_1+m_3}}{(\xi_{x-m,1}^1)^{m_1}(\xi_{x-m,1}^3)^{m_3}}\Omega_1(x)\Omega_1(x-1).
\end{equation*}
\end{corollary}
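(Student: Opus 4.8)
The plan is to bootstrap from Proposition~\ref{Prop1}. By that Proposition, whose nonvanishing hypothesis we inherit,
\begin{equation*}
\Omega(x)=(-1)^{\binom{m_1}{2}+\binom{m_3}{2}}2^{m_1+m_3}\,\Omega_1(x)\,\Omega_2(x),
\end{equation*}
so it suffices to establish the single identity
\begin{equation*}
\Omega_2(x)=\frac{1}{(\xi_{x-m,1}^1)^{m_1}(\xi_{x-m,1}^3)^{m_3}}\,\Omega_1(x-1),
\end{equation*}
which I would prove by realizing $\Omega_2(x)$ as $\Omega_1(x-1)$ up to a rescaling of rows.

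Compare the two $(m_1+m_3)\times(m_1+m_3)$ determinants column by column. The $j$-th column of $\Omega_1(x-1)$ is built from the node $x-1-2j+1=x-2j$, exactly the node used in the $j$-th column of $\Omega_2(x)$; hence in both determinants the polynomial entries are $Y_u(\theta_{x-2j})$ on the $U$-rows and $Z_u(\theta_{x-2j})$ on the $V$-rows, and they agree entrywise. The sole discrepancy is in the rational prefactors: in $\Omega_2(x)$ the $j$-th column carries $\xi_{x-2j,\,m-2j}^h$, in $\Omega_1(x-1)$ it carries $\xi_{x-2j,\,m-2j+1}^h$, with $h=1$ on the $U$-rows and $h=3$ on the $V$-rows.

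The key point is that for every $j=1,\dots,m_1+m_3$ and $h=1,3$,
\begin{equation*}
\xi_{x-2j,\,m-2j+1}^h=\xi_{x-m,1}^h\;\xi_{x-2j,\,m-2j}^h .
\end{equation*}
This comes out of iterating the multiplicativity~\eqref{condx} with $l=1$, which collapses $\xi_{x-i,\,m-i}^h$ into the telescoping product $\prod_{s=i}^{m-1}\xi_{x-s,1}^h$ (with the convention $\xi_{\cdot,0}^h=1$): taking $i=2j$ gives $\xi_{x-2j,\,m-2j}^h=\prod_{s=2j}^{m-1}\xi_{x-s,1}^h$, while applying the same identity at the shifted argument $x-1$ (legitimate because \eqref{condx} holds as an identity in its argument) gives $\xi_{x-2j,\,m-2j+1}^h=\prod_{s=2j}^{m}\xi_{x-s,1}^h$; the quotient of these two products is the single extra factor $\xi_{x-m,1}^h$, which is independent of $j$. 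Thus, entrywise, every element of a $U$-row of $\Omega_2(x)$ equals $(\xi_{x-m,1}^1)^{-1}$ times the corresponding element of $\Omega_1(x-1)$, and every element of a $V$-row of $\Omega_2(x)$ equals $(\xi_{x-m,1}^3)^{-1}$ times the corresponding element of $\Omega_1(x-1)$.

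Since $\xi_{x-m,1}^h$ is constant along each row, pulling $(\xi_{x-m,1}^1)^{-1}$ out of each of the $m_1$ rows indexed by $U$ and $(\xi_{x-m,1}^3)^{-1}$ out of each of the $m_3$ rows indexed by $V$ converts $\Omega_2(x)$ into $(\xi_{x-m,1}^1)^{-m_1}(\xi_{x-m,1}^3)^{-m_3}$ times the determinant whose $U$-row entries are $\xi_{x-2j,\,m-2j+1}^hY_u(\theta_{x-2j})$ and whose $V$-row entries are $\xi_{x-2j,\,m-2j+1}^hZ_u(\theta_{x-2j})$, i.e.\ exactly $\Omega_1(x-1)$ by \eqref{Om1}. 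Feeding this into Proposition~\ref{Prop1} yields the stated formula. The step demanding most care is the telescoping identity: one must make sure the instances of~\eqref{condx} invoked stay within the admissible range for all $j=1,\dots,m_1+m_3$ (the boundary case $m-2j=0$ being covered directly by $\xi_{x-m,0}^h=1$) and, above all, that the surplus factor is the \emph{same} $\xi_{x-m,1}^h$ for every $j$, since it is this $j$-independence that makes the row extraction, and with it the whole factorization, work.
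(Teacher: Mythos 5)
Your proof is correct and follows essentially the same route as the paper: both reduce the claim, via Proposition \ref{Prop1}, to the identity $\xi_{x-2j,m-2j+1}^h=\xi_{x-2j,m-2j}^h\,\xi_{x-m,1}^h$, which turns $\Omega_2(x)$ into $(\xi_{x-m,1}^1)^{-m_1}(\xi_{x-m,1}^3)^{-m_3}\Omega_1(x-1)$ by extracting the $j$-independent factor from each row. The only cosmetic difference is that you obtain this identity by telescoping \eqref{condx} with $l=1$, whereas the paper gets it in one stroke by applying \eqref{condx} at $x-1$ with $i=2j-1$, $l=m-2j$.
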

\begin{proof}
For $x=x-1, i=2j-1, l=m-2j$ we have
$\xi_{x-2j,m-2j+1}^h=\xi_{x-2j,m-2j}^h\xi_{x-m,1}^h,$ $h=1,3.$
Applying this to the definition of $\Omega_2(x)$ in \eqref{Om1} we obtain the result.
\end{proof}
In particular, the condition \eqref{condx} is satisfied for the auxiliary numbers $\xi_{x,j}^h$ of the Hahn polynomials (see \eqref{xit}).

\medskip

Another result that we will need is the relation between the minors of the matrix associated with $\Omega(x)$ and the determinant $\Omega(x)$ itself. For any matrix $A$, denote by $(A)_{i,j}$ the minor of $A$, given by the determinant of the matrix made by removing the $i$-th row and the $j$-th column of $A$. We will abuse the notation and denote $\left(\Omega\right)_{i,j}$ the minor of the matrix associated with the determinant $\Omega(x)$ (same for $\Omega_1(x)$ and $\Omega_2(x)$). Then we have

\begin{proposition}\label{Prop2}
Let $\Omega(x), \Omega_1(x)$ and $\Omega_2(x)$ be the determinants defined by \eqref{defom3} and \eqref{Om1}, respectively. Then, for $j=1,\ldots,m_1+m_3$ and $i=1,\ldots,m_1$, we have
\begin{align*}
\frac{\left(\Omega\right)_{i,2j-1}}{(-1)^{j+1}\Omega}&=\frac{\left(\Omega_1\right)_{i,j}}{2\Omega_1},\;\;\frac{\left(\Omega\right)_{i,2j}}{(-1)^{j}\Omega}=\frac{\left(\Omega_2\right)_{i,j}}{2\Omega_2},\;\;\frac{\left(\Omega\right)_{i+m_1,2j-1}}{(-1)^{j+m_1}\Omega}=\frac{\left(\Omega_1\right)_{i,j}}{2\Omega_1},\;\;\frac{\left(\Omega\right)_{i+m_1,2j}}{(-1)^{j+m_1}\Omega}=\frac{\left(\Omega_2\right)_{i,j}}{2\Omega_2},
\end{align*}
while for $j=1,\ldots,m_1+m_3$ and $i=1,\ldots,m_3$, we have
\begin{align}
\nonumber\frac{\left(\Omega\right)_{i+2m_1,2j-1}}{(-1)^{j+m_1+1}\Omega}&=\frac{\left(\Omega_1\right)_{i+m_1,j}}{2\Omega_1},\quad \frac{\left(\Omega\right)_{i+2m_1,2j}}{(-1)^{j+m_1}\Omega}=\frac{\left(\Omega_2\right)_{i+m_1,j}}{2\Omega_2},\\
\label{mom4}\frac{\left(\Omega\right)_{i+2m_1+m_3,2j-1}}{(-1)^{j+m_1+m_3}\Omega}&=\frac{\left(\Omega_1\right)_{i+m_1,j}}{2\Omega_1},\quad \frac{\left(\Omega\right)_{i+2m_1+m_3,2j}}{(-1)^{j+m_1+m_3}\Omega}=\frac{\left(\Omega_2\right)_{i+m_1,j}}{2\Omega_2}.
\end{align}
\end{proposition}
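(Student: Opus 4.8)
The plan is to obtain all eight identities from a single computation of an inverse matrix, reusing the block reduction carried out in the proof of Proposition~\ref{Prop1}. Write $M(x)$ for the $m\times m$ matrix whose determinant is $\Omega(x)$, and let $A(x),B(x)$ be the $(m_1+m_3)\times(m_1+m_3)$ matrices with $\det A(x)=\Omega_1(x)$ and $\det B(x)=\Omega_2(x)$. The proof of Proposition~\ref{Prop1} shows that, after the transposition $P$ of the second and third blocks of rows of $M$ and the column reordering $Q$ that lists the odd-indexed columns first and the even-indexed ones afterwards, one has
$$
\tilde M:=PMQ=\begin{pmatrix}A&B\\-A&B\end{pmatrix}.
$$
Since the quantities in the statement presuppose that $\Omega$, $\Omega_1$ and $\Omega_2$ are nonzero, we may work under that assumption; then $M$, $A$, $B$ and $\tilde M$ are all invertible, and the identities, being between rational functions of $x$, need only be checked generically.

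The one elementary tool is Cramer's rule, $(N)_{i,j}=(-1)^{i+j}\det(N)\,(N^{-1})_{j,i}$ for an invertible square matrix $N$. Applying it to $M$, to $A$ and to $B$ turns each of the eight asserted identities into the bare statement that a single entry of $M^{-1}$ equals $\pm\tfrac12$ times a prescribed entry of $A^{-1}$ or of $B^{-1}$; for instance the first one becomes $(M^{-1})_{2j-1,i}=\tfrac12(A^{-1})_{j,i}$. So everything reduces to expressing $M^{-1}$ through $A^{-1}$ and $B^{-1}$.

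For that I would use the factorization
$$
\tilde M=\begin{pmatrix}I&I\\-I&I\end{pmatrix}\begin{pmatrix}A&0\\0&B\end{pmatrix},\qquad
\begin{pmatrix}I&I\\-I&I\end{pmatrix}^{-1}=\frac12\begin{pmatrix}I&-I\\I&I\end{pmatrix},
$$
which gives at once
$$
\tilde M^{-1}=\frac12\begin{pmatrix}A^{-1}&-A^{-1}\\ B^{-1}&B^{-1}\end{pmatrix}.
$$
Since $M^{-1}=Q\,\tilde M^{-1}P$, the entry $(M^{-1})_{p,q}$ equals the entry of $\tilde M^{-1}$ whose first index is the image of the $M$-column index $p$ under the column reordering ($2j-1\mapsto j$ and $2j\mapsto(m_1+m_3)+j$) and whose second index is the image of the $M$-row index $q$ under the row transposition (which fixes the first row block, sends $m_1+i\mapsto(m_1+m_3)+i$ and $2m_1+i\mapsto m_1+i$, and fixes the fourth block). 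Plugging these two dictionaries into the block form of $\tilde M^{-1}$ --- whose top two blocks carry $\pm\tfrac12A^{-1}$ and whose bottom two blocks carry $\tfrac12B^{-1}$ --- and recombining with the $(-1)^{i+j}$ prefactors from Cramer's rule reproduces exactly the factors $2$ and the signs $(-1)^{j+1}$, $(-1)^{j}$, $(-1)^{j+m_1}$, $(-1)^{j+m_1+1}$ and $(-1)^{j+m_1+m_3}$ of the statement; the identities in \eqref{mom4} and on the line above it come out the same way.

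The only genuinely delicate point is this sign and index bookkeeping through $P$ and $Q$; it is entirely routine but has to be done with care. Since the eight cases fall into four structurally identical pairs --- odd versus even column, and $U$-row versus $V$-row --- in practice it amounts to four short computations, each of the type indicated above for the first identity.
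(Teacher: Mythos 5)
Your proof is correct, and it reaches the eight identities by a route that is organized differently from the paper's. The paper applies the same row/column interchanges as in Proposition \ref{Prop1} directly to each minor, asserting for instance that $\left(\Omega\right)_{i,2j-1}=(-1)^{\binom{m_1}{2}+\binom{m_3}{2}+j+1}2^{m_1+m_3-1}\left(\Omega_1\right)_{i,j}\Omega_2$ and $\left(\Omega\right)_{i,2j}=(-1)^{\binom{m_1}{2}+\binom{m_3}{2}+j}2^{m_1+m_3-1}\Omega_1\left(\Omega_2\right)_{i,j}$, and then divides by the factorization \eqref{FacOm}; you instead conjugate the whole matrix into the block form $\left(\begin{smallmatrix}A&B\\-A&B\end{smallmatrix}\right)$, invert it once via $\left(\begin{smallmatrix}I&I\\-I&I\end{smallmatrix}\right)\left(\begin{smallmatrix}A&0\\0&B\end{smallmatrix}\right)$, and convert minors into inverse entries by Cramer's rule. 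The two are of course equivalent (adjugate versus inverse), but your organization buys something concrete: all eight identities fall out of the single formula $\tilde M^{-1}=\tfrac12\left(\begin{smallmatrix}A^{-1}&-A^{-1}\\B^{-1}&B^{-1}\end{smallmatrix}\right)$ together with the permutation dictionary $(M^{-1})_{p,q}=(\tilde M^{-1})_{\gamma(p),\rho(q)}$, and you never have to factorize the bordered $(m-1)\times(m-1)$ minor determinants, which after deleting one row and one column are no longer exactly of the two-block shape (this is the step the paper compresses into ``using the same procedure''); moreover the signs of the permutations never enter, since you use the permutation matrices themselves rather than their determinants. The paper's route, on the other hand, stays at the level of determinant identities and needs no invertibility hypothesis beyond what the statement already presupposes. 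I checked your dictionary and the resulting bookkeeping: with $\gamma(2j-1)=j$, $\gamma(2j)=m_1+m_3+j$, and $\rho$ fixing the first and fourth row blocks, sending $m_1+i\mapsto m_1+m_3+i$ and $2m_1+i\mapsto m_1+i$, one gets for example $(M^{-1})_{2j-1,i}=\tfrac12(A^{-1})_{j,i}$, $(M^{-1})_{2j,m_1+i}=\tfrac12(B^{-1})_{j,i}$ and $(M^{-1})_{2j-1,2m_1+i}=\tfrac12(A^{-1})_{j,m_1+i}$, and combining these with the cofactor signs reproduces exactly the prefactors $(-1)^{j+1}$, $(-1)^{j}$, $(-1)^{j+m_1}$, $(-1)^{j+m_1+1}$ and $(-1)^{j+m_1+m_3}$ in the statement, including those in \eqref{mom4}; so the ``routine'' bookkeeping you defer does indeed close without surprises.
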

\begin{proof}
We will only prove the first two identities. The rest are very similar. Using the same procedure as in Proposition \ref{Prop1} it is possible to see that
\begin{align*}
\left(\Omega\right)_{i,2j-1}&=(-1)^{\binom{m_1}{2}+\binom{m_3}{2}+j+1}2^{m_1+m_3-1}\left(\Omega_1(x)\right)_{i,j}\Omega_2(x),\\
\left(\Omega\right)_{i,2j}&=(-1)^{\binom{m_1}{2}+\binom{m_3}{2}+j}2^{m_1+m_3-1}\Omega_1(x)\left(\Omega_2(x)\right)_{i,j}.
\end{align*}
It is now enough to use this and \eqref{FacOm}.
\end{proof}

\section{Krall-Hahn polynomials}\label{sectKH}
We start with some basic definitions and facts about Hahn polynomials.
For $a,a+b+1,a+b+N+1\neq-1,-2,\ldots$, we write $(h_n^{a,b,N})_n$ for the sequence of Hahn polynomials defined by
\begin{equation*}\label{HP}
h_n^{a,b,N}(x)=\sum_{j=0}^n\frac{(-x)_j(N-n+1)_{n-j}(a+b+1)_{j+n}}{(2+a+b+N)_n(a+1)_j(n-j)!j!},\quad n\ge 0.
\end{equation*}
Hahn polynomials are eigenfunctions of a second-order difference operator $D_H$
\begin{equation}\label{dopHahn}
D_H(h_n^{a,b,N})=\theta_nh_n^{a,b,N},\quad\theta_n=n(n+a+b+1), \quad n\geq0.
\end{equation}
We also consider the associated function
\begin{equation}\label{sig}
\sigma_{x}=\theta_{x-3/2}-\theta_{x-1/2}=-(2x+a+b-1).
\end{equation}
If we assume that $a,b,a+b,a+b+N+1\neq -1,-2,\ldots,$ and $N+1$ is not a positive integer, then the Hahn polynomials are always orthogonal with respect to a moment functional $\rho_{a,b,N}$.
When $N+1$ is a positive integer, $a,b\neq -1,-2,\ldots,-N$, and $a+b\neq -1,-2,\ldots,-2N-1$, the first $N+1$ Hahn polynomials are orthogonal with respect to the Hahn measure
\begin{equation*}\label{pesoH}
    \rho_{a,b,N}=N!\sum_{x=0}^N\frac{\Gamma(a+x+1)\Gamma(N-x+b+1)}{x!(N-x)!}\delta_x,
\end{equation*}
and they have non-null norms. The discrete measure $\rho_{a,b,N}$ is positive only when $a,b>-1$ or $a,b<-N$.

We associate to a quartet $\F=(F_1,F_2,F_3,F_4)$ of finite sets of positive integers (the empty set is allowed) the Krall-Hahn weight
$\rho _{a,b,N}^{\F}$ (\ref{udspmi}). We proved in \cite{ddI2} that the associated Krall-Hahn orthogonal polynomials (whenever they exist) are eigenfunctions of a higher-order difference operator of the form (\ref{doho}) with $r=-s$ and order given by (\ref{orderi}).
%\begin{equation}\label{orderH}
%2\left(\sum_{f\in F_1,F_2,F_3,F_4}f-\sum_{i=1}^4\binom{k_i}{2}+1\right),
%\end{equation}
%where $k_i$ denotes the cardinal of $F_i,i=1,2,3,4$.

The goal of this paper is to prove that under some symmetries in the choice of the parameters $a,b,$ and the quartet $\F$, we will have that the associated Krall-Hahn orthogonal polynomials are eigenfunctions of a higher-order difference operator of order \emph{less} than \eqref{orderi}. This choice is given by
\begin{equation*}\label{choi}
a=b,\quad F_1=F_2,\quad\mbox{and}\quad F_3=F_4.
\end{equation*}
Therefore, from now on we will remove the dependence on $b$ in the notation and call $F=F_1=F_2$, $G=F_3=F_4$, and $\F=(F,G)$. To simplify the functions $\xi _{x,y}^h $ (\ref{xit}) we set $\hat a=a-\max F-\max G-2$, $\hat N=N+2\max G +2$ so that $\tilde a=a$, $\tilde b=b$ and $\tilde N=N$. Then the Krall-Hahn weight (\ref{udspmi}) is given by
\begin{equation}\label{KHw2}
\rho _{\hat a,\hat N}^{\F}=\prod_{f\in F}(\hat a+\hat N+1+f-x)(x+\hat a+1+f)\prod_{f\in G}(\hat N-f-x)(x-f)\rho _{\hat a,\hat N}.
\end{equation}
We remark that with these choices the four rational functions (\ref{xit}) simplify to
\begin{equation}\label{xis}
\xi_{x,j}^1=(-1)^j\frac{(x-j-N)_j}{(x-j+2a+N+2)_j},\quad \xi_{x,j}^3=1,
\end{equation}
and $\xi_{x,j}^2=(-1)^j\xi_{x,j}^1$, $\xi_{x,j}^4=(-1)^j$.

For convenience, we will write $\xi_{x,j}^1$ in the following way
\begin{equation}\label{xi11}
\xi_{x,j}^1=\frac{N_{x,j}}{D_{x,j}},\quad N_{x,j}=(x-j-N)_j,\quad D_{x,j}=(-1)^j(x-j+2a+N+2)_j.
\end{equation}
Observe that $N_{x,j}$ and $D_{x,j}$ satisfy similar relations as in \eqref{condx}, i.e.
\begin{equation}\label{condxND}
N_{x-i,m-i}=N_{x-i,l}N_{x-i-l,m-i-l},\quad D_{x-i,m-i}=D_{x-i,l}D_{x-i-l,m-i-l}.
\end{equation}
Here we are using the convention
$$
N_{x,i}=\frac{1}{N_{x-i,-i}},\quad D_{x,i}=\frac{1}{N_{x-i,-i}},\quad i\leq-1.
$$
We also need to consider the involution $I$ defined by
\begin{equation*}\label{invI}
I(F)=\{1,2,\ldots, f_k\}\setminus \{f_k-f,f\in F\},
\end{equation*}
where $f_k=\max F$.  We have that $k=f_k-m+1$, where $m$ denotes the cardinal of $I(F)$. We also have the relation
\begin{equation}\label{relF}
\sum_{u\in I(F)}u+kf_k=\sum_{f\in F}f+\binom{f_k+1}{2}.
\end{equation}

To our Krall-Hahn family, we associate the quasi-Casoratian determinants $\Omega ^{a,\F}$, $\Omega_1 ^{a,\F}$ and $\Omega_2 ^{a,\F}$ of the form (\ref{defom3}) and (\ref{Om1}), respectively, where $\xi_{x,j}^h$, $h=1,3$, are given by (\ref{xis}), $U=I(F)$, $V=I(G)$ and $Y_u$, $u\in U$ and $Z_u$, $u\in V$, are given by
\begin{equation*}\label{YZdH}
Y_u(x)=R_u^{-a,-a,2a+N}(x+2a),\quad Z_u(x)=R_u^{-a,-a,-2-N}(x+2a),
\end{equation*}
where $R_u$ is the corresponding dual Hahn polynomial.

Given a rational function $S$ we define the rational functions $M_h$, $h=1,\ldots , m$, by
\begin{equation}\label{defMh}
M_h(x)=\sum_{j=1}^{m}(-1)^{h+j}\xi_{x,m-j}^{i(h)}S(x+j)\left(\Omega(x+j)\right)_{h,j},
\end{equation}
where 
%$i(h)=1$ for $h=1,\ldots, m_1$, $i(h)=2$ for $h=m_1+1,\ldots , 2m_1$, $i(h)=3$ for $h=m_1+m_2+1,\ldots , 2m_1+m_3$ and $i(h)=4$ for $h=2m_1+m_3+1,\ldots , m$.
$$
i(h)=\begin{cases}
1, &h=1,\ldots , m_1,\\
2, &h=m_1+1,\ldots , 2m_1,\\
3, &h=2m_1+1,\ldots , 2m_1+m_3,\\
4, &h=2m_1+m_3+1,\ldots , m.
\end{cases}
$$
We are interested in rational functions $S$ satisfying the following three hypotheses:
\begin{align}\label{ass0}
&\mbox{$S(x)\Omega (x)$ is a polynomial in $x$.}\\
\label{ass1}&\mbox{There exist $\tilde{M}_1,\ldots,\tilde{M}_{m},$  polynomials in $x$ such that}\\
\nonumber&\hspace{1cm}\mbox{$M_h(x)=\sigma_{x+1}\tilde{M}_h(\theta_x)$, $h=1,\ldots,m.$}\\
\label{ass2}&\mbox{There exists a polynomial $P_S$ such that }\\
\nonumber&\hspace{1cm}\mbox{$\displaystyle P_S(\theta_x)-P_S(\theta_{x-1})=S(x)\Omega(x)+S(x+m)\Omega(x+m)$.}
\end{align}
Notice that (\ref{ass1}) implies that $M_h$ are also polynomials.

In Lemma 5.2 of \cite{ddI2} a rational function $S(x)$ is chosen in such a way that the previous hypotheses hold.  This $S(x)$ is valid for any values of the parameters $a,b,N$ and any sets of positive integers $F_1, F_2, F_3, F_4$, and the order of the associated difference operator is \eqref{orderi}. When the symmetries $a=b$ and $F_1=F_2$, $F_3=F_4$ are assumed, a better rational function $S(x)$ can be found, in the sense that the associated difference operator has order \emph{lower} than \eqref{orderi}. More precisely, we prove that this order is given by (\ref{orderii}).

Our rational function $S$ has the form
\begin{equation}\label{SSx}
S(x)=\frac{\sigma_{x-\frac{m-1}{2}}R(x)}{\Omega(x)},
\end{equation}
where $R$ is a suitable polynomial (see \eqref{EQR2} below). Observe that with this choice we can write $M_h(x)$ in \eqref{defMh} as
\begin{equation}\label{Mhx}
M_h(x)=\sum_{j=1}^{m}(-1)^{h+j}\xi_{x,m-j}^{i(h)}\sigma_{x+j-\frac{m-1}{2}}R(x+j)\frac{\left(\Omega(x+j)\right)_{h,j}}{\Omega(x+j)}.
\end{equation}
Therefore, from Proposition \ref{Prop2} it is easy to see that
$$
M_h(x)=M_{h+m_1}(x),\quad h=1,\ldots,m_1,\quad \mbox{and}\quad M_{2m_1+h}(x)=M_{2m_1+m_3+h}(x),\quad h=1,\ldots,m_3.
$$
Hence, the only relevant functions are the first $m_1$ and the first $m_3$ after the first $2m_1$ functions.

If hypotheses \eqref{ass0}, \eqref{ass1} and \eqref{ass2} hold, the $\D$-operator method provides a difference operator $D_S$ for which the Krall-Hahn polynomials are eigenfunctions (see Theorem 3.1 of \cite{ddI2}). This difference operator can be explicitly constructed:
\begin{equation}\label{Dq2}
D_{S}=\frac{1}{2}P_S(D_H)+\sum_{h=1}^{m_1}\tilde M_h(D_H)\left(\D_1+\D_2\right) Y_h(D_H)+\sum_{h=1}^{m_3}\tilde M_{2m_1+h}(D_H)\left(\D_3+\D_4\right) Z_h(D_H),
\end{equation}
where $D_H$ is the second-order difference operator for the Hahn polynomials (\ref{dopHahn}), $P_S$ is defined in \eqref{ass2}  and $\D_1,\D_2,\D_3,\D_4$ are
the corresponding $\D$-operators for the Krall polynomials defined by
\begin{align*}
\D_1&=\frac{a+b+1}{2}I+x\nabla,\quad &\D_2&=\frac{a+b+1}{2}I+(x-N)\Delta,\\
\D_3&=\frac{a+b+1}{2}I+(x+a+1)\Delta,\quad &\D_4&=\frac{a+b+1}{2}I+(x-b-N-1)\nabla.
\end{align*}

The orthogonal polynomials with respect to the Krall-Hahn weight \eqref{KHw2} are eigenfunctions of \eqref{Dq2}.

In order to introduce the polynomial $R$ in \eqref{SSx} we need the following technical Lemma (which we prove in the Appendix).
\begin{lemma}\label{lpqR} Let $p$ and $q$ be the polynomials defined by
\begin{align}
\label{ppp}p(x)&=\prod_{j=1}^{m_1-1}D_{x-1,2j}N_{x-m+2j+1,2j},\\
\label{qqq}q(x)&=\prod_{i=1}^{\lfloor \frac{m_1+m_3}{2}\rfloor}\left(\prod_{j=1}^{m-4i+1}\sigma_{x-m+2i+j}\right),
\end{align}
where $N_{x,j}, D_{x,j}$ are given by \eqref{xi11}. Let $H$ be the rational function defined by
\begin{equation}\label{HH2}
H(x)=\frac{\sigma_{x-m_1-m_3+1}\left(D_{x-1,m-1}\right)^{m_1}}{p(x)q(x)\left(N_{x-m+1,1}\right)^{m_1}}.
\end{equation}
Then $H(x)\Omega_1^{a,\F}(x)$ is a polynomial, where $\Omega_1^{a,\F}(x)$ is defined by \eqref{Om1}.
\end{lemma}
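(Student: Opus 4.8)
The plan is to determine, as a rational function of $x$, the exact poles and the forced polynomial factors of $\Omega_1^{a,\F}(x)$ and to match them against the pieces of $H$. It suffices to show that $(D_{x-1,m-1})^{m_1}\Omega_1^{a,\F}(x)$ is a polynomial and that it is divisible by $p(x)q(x)(N_{x-m+1,1})^{m_1}$; the extra factor $\sigma_{x-m_1-m_3+1}$ in the numerator of $H$ only helps.

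\emph{Poles, and the factors $(N_{x-m+1,1})^{m_1}$ and $p(x)$.} Using \eqref{condxND} in the form $D_{x-1,m-1}=D_{x-1,2j-2}D_{x-2j+1,m-2j+1}$, each coefficient $\xi_{x-2j+1,m-2j+1}^1=N_{x-2j+1,m-2j+1}/D_{x-2j+1,m-2j+1}$ occurring in the first $m_1$ rows of \eqref{Om1} has denominator dividing $D_{x-1,m-1}$; since every summand of the Laplace expansion of the $(m_1+m_3)\times(m_1+m_3)$ determinant $\Omega_1^{a,\F}$ uses exactly $m_1$ of these coefficients (one per row indexed by $U$), the function $(D_{x-1,m-1})^{m_1}\Omega_1^{a,\F}(x)$ is a polynomial, whose $\pi$-term equals, after pairing one $D_{x-1,m-1}$ with each denominator, $\operatorname{sgn}(\pi)\prod_{i\in U}D_{x-1,2\pi(i)-2}N_{x-2\pi(i)+1,m-2\pi(i)+1}Y_{\bullet}(\theta_{\bullet})\cdot\prod_{i\in V}Z_{\bullet}(\theta_{\bullet})$. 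Now \eqref{condxND} also gives $N_{x-2j+1,m-2j+1}=N_{x-2j+1,m-2j}N_{x-m+1,1}$, so $(N_{x-m+1,1})^{m_1}$ divides every $\pi$-term; dividing it out and writing the survivors as Pochhammer symbols with $j$-independent base ($D_{x-1,2j}=(x+2a+N+1-2j)_{2j}$ and $N_{x-2j+1,m-2j}=(x-m+1-N)_{m-2j}$), nestedness of Pochhammer symbols shows that, for every $\pi$, $\prod_{j=1}^{m_1-1}D_{x-1,2j}$ divides $\prod_{i\in U}D_{x-1,2\pi(i)-2}$ (the sorted $\pi(i)$ satisfy $\pi(i)\ge i$) and $\prod_{j=1}^{m_1-1}N_{x-m+2j+1,2j}$ divides $\prod_{i\in U}N_{x-2\pi(i)+1,m-2\pi(i)}$ (the sorted $\pi(i)$ satisfy $\pi(i)\le m_3+i$); their product is $p(x)$. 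Thus $p(x)(N_{x-m+1,1})^{m_1}$ divides $(D_{x-1,m-1})^{m_1}\Omega_1^{a,\F}(x)$, and it remains to show the polynomial $\Psi(x):=(D_{x-1,m-1})^{m_1}\Omega_1^{a,\F}(x)/\bigl(p(x)(N_{x-m+1,1})^{m_1}\bigr)$ is divisible by $q(x)$.

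\emph{The factor $q(x)$.} Because $a=b$ we have $\theta_x=x(x+2a+1)$, $\sigma_x=-(2x+2a-1)$, and the identity $\theta_{x-2j+1}-\theta_{x-2j'+1}=-2(j'-j)\sigma_{x-j-j'+2}$; a multiplicity count then shows that $q(x)$ equals, up to a nonzero constant, the discrete Vandermonde $\prod_{1\le j<j'\le m_1+m_3}(\theta_{x-2j+1}-\theta_{x-2j'+1})$ in the nodes $\theta_{x-2j+1}$. So it suffices to prove that $\Psi$ vanishes, to the correct order, at every point where some of these nodes collide. At such a point $\Omega_1^{a,\F}$ itself need not vanish, because the colliding columns are proportional only in the rows indexed by $U$ (the $\xi^1$-coefficients are column-dependent); but the normalizations above are tailored so that at the special point the Pochhammer factors still attached to the two colliding columns become equal — one checks this using $\theta_x=\theta_{-x-2a-1}$ — which makes the corresponding columns of the normalized matrix proportional and forces $\Psi$ to vanish. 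Running this over all collisions and combining with the previous step gives $q(x)\mid\Psi(x)$, so $H(x)\Omega_1^{a,\F}(x)=\sigma_{x-m_1-m_3+1}\Psi(x)/q(x)$ is a polynomial.

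The main obstacle is the last step, and within it the higher-order collisions (two or more node-pairs colliding at once, which is exactly what makes $\sigma_{x-s}$ occur to a power $>1$ in $q$): one must show $\Psi$ vanishes to the full prescribed order, which means running the Vandermonde-type argument on the normalized determinant $\Psi$ rather than on $\Omega_1^{a,\F}$, carrying the Pochhammer bookkeeping of the preceding step alongside it, and using the nondegeneracy of the dual-Hahn choice of $Y_u,Z_u$ (together with the hypothesis $\Omega_1\neq0$ or $\Omega_2\neq0$) to exclude a cancellation that would lower the order. A tempting shortcut — using Corollary \ref{coro1} to write $\Omega^{a,\F}(x)$ as a constant times $\Omega_1^{a,\F}(x)\Omega_1^{a,\F}(x-1)/(\xi_{x-m,1}^1)^{m_1}$ and invoking the known structure of $\Omega^{a,\F}$ from \cite{ddI2} — founders on the usual difficulty of recovering a single factor from a product, so the direct route above appears to be the efficient one.
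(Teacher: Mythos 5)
Your reduction step is sound and is essentially the paper's: clearing the denominators of the $\xi^1$-entries with $D_{x-1,m-1}=D_{x-1,2j-2}D_{x-2j+1,m-2j+1}$ and $N_{x-2j+1,m-2j+1}=N_{x-2j+1,m-2j}N_{x-m+1,1}$ (i.e.\ \eqref{condxND}) turns $H(x)\Omega_1^{a,\F}(x)$ into $\sigma_{x-m_1-m_3+1}$ times the determinant with entries $N_{x-2j+1,m-2j}D_{x-1,2j-2}Y_u(\theta_{x-2j+1})$ and $Z_u(\theta_{x-2j+1})$, divided by $p(x)q(x)$. But at that point your quotient $\Psi(x)/q(x)$ is \emph{exactly} the rational function $P(x)$ of \eqref{PPP}, and the paper finishes in one line by invoking Lemma~\ref{lemaP} (a quoted instance of Lemma~5.1 of \cite{ddI2}), which asserts that $P$ is a polynomial (of degree $d$). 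You never use Lemma~\ref{lemaP}; instead you set out to reprove its content from scratch, and that is where the proof breaks down.

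Concretely, the gap is the divisibility by $q(x)$. Your identification of $q$ with the Vandermonde product $\prod_{j<j'}(\theta_{x-2j+1}-\theta_{x-2j'+1})$ is correct, and your observation that at a simple collision $\sigma_{x_0-j-j'+2}=0$ the prefactors $N_{x_0-2j+1,m-2j}D_{x_0-1,2j-2}$ and $N_{x_0-2j'+1,m-2j'}D_{x_0-1,2j'-2}$ coincide (via the reflection fixing $\theta$) does handle simple zeros of $q$. However, as soon as $m_1+m_3\ge 4$ the factors $\sigma_{x-t}$ occur in $q$ with multiplicity greater than one, and showing that the normalized determinant vanishes to the full prescribed order is precisely the nontrivial content of Lemma~\ref{lemaP}; your last paragraph only describes what such an argument would have to do (derivative/linear-dependence conditions beyond pointwise column proportionality) and explicitly labels it ``the main obstacle'' without carrying it out. (Two smaller points: the appeal to ``nondegeneracy of the dual-Hahn choice'' and to the hypothesis ``$\Omega_1\neq0$ or $\Omega_2\neq0$'' is misplaced --- divisibility only requires vanishing to at least the stated order, and that hypothesis belongs to Proposition~\ref{Prop1}, not here; and your separate $p$-divisibility argument, while plausible, becomes unnecessary once Lemma~\ref{lemaP} is invoked.) So the proposal is incomplete: to close it, replace the collision analysis by a citation of Lemma~\ref{lemaP}, which is exactly the route the paper takes.
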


We then define the polynomial $R$ as the (monic) solution of the difference equation
\begin{equation}\label{EQR2}
\sigma_{x-\frac{m-1}{2}}R(x)+\sigma_{x+1-\frac{m-1}{2}}R(x+1)=H(x)\Omega_1^{a,\F}(x).
\end{equation}
We are now ready to establish the main result of this paper.

\begin{theorem}\label{Th2}
With the previous choices of the functions $\Omega ^{a,\F }, S$ and $M_h, h=1,\ldots,m$, the three hypotheses \eqref{ass0}, \eqref{ass1} and \eqref{ass2} hold. Moreover, the order of the associated difference operator $D_S$ (\ref{Dq2}) is given by
\begin{equation*}\label{ordD2}
2\left(\sum_{f\in F_1,F_3}f-\binom{k_1}{2}-\binom{k_3}{2}+1\right).
\end{equation*}
\end{theorem}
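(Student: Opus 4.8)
The plan is to verify the three hypotheses \eqref{ass0}, \eqref{ass1}, \eqref{ass2} and then read off the order of $D_S$ from the degree of $S\Omega^{a,\F}$. Throughout I abbreviate $\Omega=\Omega^{a,\F}$, $\Omega_k=\Omega_k^{a,\F}$, and I write $x_0=\tfrac12(m-a-b)$ for the zero of $\sigma_{x-\frac{m-1}{2}}$ and $c_0=x_0-\tfrac12$ for the zero of $\sigma_{x-m_1-m_3+1}$. Hypothesis \eqref{ass0} reduces to showing that the solution $R$ of \eqref{EQR2} is a polynomial, since $S(x)\Omega(x)=\sigma_{x-\frac{m-1}{2}}R(x)$ by \eqref{SSx}. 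By Lemma \ref{lpqR}, $G(x):=H(x)\Omega_1(x)$ is a polynomial; since $f\mapsto f(x)+f(x+1)$ equals $2I+\Delta$ on polynomials and is invertible there, \eqref{EQR2} has a unique polynomial solution $F(x):=\sigma_{x-\frac{m-1}{2}}R(x)$, and $R=F/\sigma_{x-\frac{m-1}{2}}$ is a polynomial iff $F(x_0)=0$. Granting the key identity $G(2c_0-x)=-G(x)$ (proved next), uniqueness forces $F(2x_0-x)=-F(x)$, because the right-hand side solves the same equation; hence $F(x_0)=0$, so $R$ is a polynomial and moreover $R(2x_0-x)=R(x)$. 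Then $F(x)=-2(x-x_0)R(x)$ is anti-symmetric about $x_0$, so $S(x+m)\Omega(x+m)=F(x+m)=-F(-a-b-x)=-S(-a-b-x)\Omega(-a-b-x)$ and $g(x):=S\Omega(x)+S\Omega(x+m)$ is odd about $-\tfrac{a+b}{2}$; since $\theta_x-\theta_{x-1}=2x+a+b$, any such polynomial equals $\sum_kc_k(\theta_x^k-\theta_{x-1}^k)=P_S(\theta_x)-P_S(\theta_{x-1})$ with $P_S=\sum_kc_kX^k$, which is \eqref{ass2}.

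The identity $G(2c_0-x)=-G(x)$, i.e. the anti-symmetry of $G=H\Omega_1$ about $c_0=\tfrac12(m-1-a-b)$, is the technical heart. I would compute the effect of the reflection $x\mapsto 2c_0-x=m-1-a-b-x$ on $\Omega_1$ and on $H$ separately. Since the entries of $\Omega_1$ are built from polynomials in $\theta_{x-2j+1}$ and $\theta_y=\theta_{-y-a-b-1}$, the reflection sends $\theta_{x-2j+1}$ to $\theta_{x-2j'+1}$ with $j'=m_1+m_3+1-j$; it therefore reverses the columns (contributing the sign $(-1)^{\binom{m_1+m_3}{2}}$) and transforms each factor $\xi^1_{x-2j+1,m-2j+1}$ by an explicit ratio, which is handled with the Pochhammer identities \eqref{condxND}. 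One obtains $\Omega_1(2c_0-x)=\Phi(x)\Omega_1(x)$ for an explicit rational factor $\Phi$. A parallel computation from \eqref{ppp}, \eqref{qqq}, \eqref{HH2}, the relations \eqref{condxND}, and the anti-symmetry of $\sigma$ gives $H(2c_0-x)=-\Phi(x)^{-1}H(x)$ — the lone factor $\sigma_{x-m_1-m_3+1}$ in the numerator of $H$, which vanishes at $c_0$, being responsible for the sign — and multiplying yields the claim.

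For \eqref{ass1} it is enough to treat $h=1,\dots,m_1$ and $h=2m_1+1,\dots,2m_1+m_3$, the remaining $M_h$ coinciding with these by the remark after \eqref{Mhx}. For $h\le m_1$ I would split the sum in \eqref{Mhx} into odd and even $j$, substitute the minor identities of Proposition \ref{Prop2}, use $\Omega_1(x-1)=(\xi^1_{x-m,1})^{m_1}\Omega_2(x)$ together with its analogue for minors (both consequences of \eqref{condx}, as in Corollary \ref{coro1}), and collapse the two resulting sums by means of \eqref{EQR2}; the determinants $\Omega_1$ then cancel and one is left with
$$
M_h(x)=\tfrac12\sum_{l=1}^{m_1+m_3}(-1)^{h+l}\,\xi^1_{x,\,m-2l+1}\,H(x+2l-1)\,\bigl(\Omega_1(x+2l-1)\bigr)_{h,l}.
$$
After the change of variable making the odd shifts $x-2j+1$ consecutive, this is — up to the constant $\tfrac12$ and with the rational function $H$ playing the role there played by $S\Omega_1$ — precisely the quantity whose factorization $M_h(x)=\sigma_{x+1}\tilde M_h(\theta_x)$ through $\theta$, with $\tilde M_h$ a polynomial, is established for the (half-size) Krall--Hahn problem in Lemma 5.2 of \cite{ddI2}; the same argument applies here, using that $Y_u$, $Z_u$ are dual Hahn polynomials, the explicit form \eqref{xis}, and the $\sigma$-factor carried by $H$ through \eqref{HH2}.

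Finally, as recalled after Theorem \ref{cori1}, the order of $D_S$ equals $1+\deg(S\Omega)$; by \eqref{SSx}, $\deg(S\Omega)=1+\deg R$, and by \eqref{EQR2}, $\deg R=\deg(H\Omega_1)-1$. Computing $\deg(H\Omega_1)$ from \eqref{Om1}, \eqref{ppp}, \eqref{qqq}, \eqref{HH2} — with $\deg_{\mathrm{rat}}H=1+m_1m_3-\binom{m_1}{2}-\binom{m_3}{2}$ and the dual Hahn Casoratian $\Omega_1$ of degree $2\bigl(\sum_{u\in I(F)}u+\sum_{u\in I(G)}u\bigr)-\binom{m_1+m_3}{2}$ — and converting $\sum_{u\in I(F)}u$, $\sum_{u\in I(G)}u$ into $\sum_{f\in F}f$, $\sum_{f\in G}f$, $\binom{k_1}{2}$, $\binom{k_3}{2}$ by the involution identity \eqref{relF} (with $k_i=\max F_i-m_i+1$), one finds $\deg(H\Omega_1)=1+2\bigl(\sum_{f\in F_1,F_3}f-\binom{k_1}{2}-\binom{k_3}{2}\bigr)$, hence $\deg R=2\bigl(\sum_{f\in F_1,F_3}f-\binom{k_1}{2}-\binom{k_3}{2}\bigr)$ and the order $2\bigl(\sum_{f\in F_1,F_3}f-\binom{k_1}{2}-\binom{k_3}{2}+1\bigr)$. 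The main obstacle is the second paragraph: establishing the clean anti-symmetry of $G$ about $c_0$ requires careful bookkeeping of the column-reversal sign and of the numerous Pochhammer and $\sigma$ factors in $\xi^1$, $p$, $q$ and $D_{x-1,m-1}$; once that identity is in hand, the remaining steps are short or mechanical.
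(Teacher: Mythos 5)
Your outline reproduces the paper's strategy in its main structural points (the ansatz \eqref{SSx}, the difference equation \eqref{EQR2}, the half-size reduction of $M_h$ via Proposition \ref{Prop2} and Corollary \ref{coro1}, the anti-symmetry of $H\Omega_1^{a,\F}$ under the relevant reflection, and the final degree count via \eqref{relF}). Your handling of \eqref{ass0} and \eqref{ass2} is correct and in fact more self-contained than the paper's: arguing that the unique polynomial solution $F$ of $F(x)+F(x+1)=H(x)\Omega_1^{a,\F}(x)$ inherits the anti-symmetry, hence vanishes at the zero of $\sigma_{x-\frac{m-1}{2}}$, gives the polynomiality of $R$ cleanly, and your observation that any polynomial odd about $-\tfrac{a+b}{2}$ is of the form $P_S(\theta_x)-P_S(\theta_{x-1})$ replaces the paper's appeal to the proof of Lemma 5.2 of \cite{ddI2}. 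The key anti-symmetry identity $G(2c_0-x)=-G(x)$ is only sketched, but your plan (column reversal plus the transformation rules for the Pochhammer factors and the sign of $\sigma$) is exactly the paper's computation of $\I^{2a-m-1}\bigl(H(x)\Omega_1^{a,\F}(x)\bigr)=-H(x-1)\Omega_1^{a,\F}(x-1)$ using \eqref{Iprop1}, \eqref{IpropND} and \eqref{invpq}, so that part is acceptable as a sketch.

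The genuine gap is at \eqref{ass1}, and it propagates into your order count. Having derived the half-size expression \eqref{MhxN1} (which matches the paper), you dismiss the factorization $M_h(x)=\sigma_{x+1}\tilde M_h(\theta_x)$ with the phrase that ``the same argument as in Lemma 5.2 of \cite{ddI2} applies''. It does not apply directly: the individual summands $\xi^1_{x,m-2l+1}H(x+2l-1)\bigl(\Omega_1^{a,\F}(x+2l-1)\bigr)_{h,l}$ are genuinely rational, because although $H\Omega_1^{a,\F}$ is a polynomial (Lemma \ref{lpqR}), $H$ multiplied by a \emph{minor} of $\Omega_1^{a,\F}$ is not; the denominators $q(x+2l-1)$ from \eqref{HH2} and \eqref{qqq} produce simple poles at points of the form $-(a+b+1+j)/2$, which is precisely the obstruction the paper points out when stating that Lemma 3.4 of \cite{ddI2} cannot be used here and that the strengthened Lemma \ref{lgp2} is needed. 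Two things are therefore missing in your argument: (i) the explicit verification of the skew-invariance $\I^{2a}(M_h)=-M_h$ (a determinant/involution computation of the same kind as the one you sketch for $H\Omega_1^{a,\F}$, but which you never perform for $M_h$), and (ii) the deduction, via Lemma \ref{lgp2}, that the rational function $M_h$ is in fact a polynomial, after which skew-invariance yields divisibility by $\sigma_{x+1}$ with quotient in $\mathbb{R}[\theta_x]$. Likewise, your closing assertion that the order of $D_S$ equals $1+\deg\bigl(S\Omega^{a,\F}\bigr)$ hides the same issue: the order of \eqref{Dq2} is $2\deg P_S$ only once one controls the degrees of the polynomials $\tilde M_h$ so that the terms $\tilde M_h(D_H)(\D_1+\D_2)Y_h(D_H)$ and $\tilde M_{2m_1+h}(D_H)(\D_3+\D_4)Z_h(D_H)$ do not exceed that order, and this degree bound is exactly the second half of Lemma \ref{lgp2}, which your proposal never invokes.
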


To prove the Theorem we need to introduce some more tools. First of all, we need the involution that characterizes the subring $\mathbb{R}[\theta_x]$ in $\mathbb{R}[x]$. This involution is given by
\begin{equation}\label{inv}
\I^{a+b}\big(p(x)\big)=p\big(-(x+a+b+1)\big),\quad p\in\mathbb{R}[x].
\end{equation}
Clearly we have $\I^{a+b}(\theta_x)=\theta_x$. Hence every polynomial in $\theta_x$ is invariant under the action of $\I^{a+b}$. Conversely, if $p\in\mathbb{R}[x]$ is invariant under $\I^{a+b}$, then $p\in\mathbb{R}[\theta_x]$. We also have that if $p\in\mathbb{R}[x]$ is skew invariant, i.e. $\I^{a+b}(p)=-p,$ then $p$ is divisible by $\theta_{x-1/2}-\theta_{x+1/2}$, and the quotient belongs to $\mathbb{R}[\theta_x]$. We remark here that, in the case of Hahn polynomials and from the definition of $\theta_x$ and $\sigma_x$, we have that $\sigma_{x+1}=\theta_{x-1/2}-\theta_{x+1/2}$. Observe that $\sigma_{x+1}$ is skew invariant itself.

According to definitions \eqref{inv}, (\ref{dopHahn}), (\ref{sig}), (\ref{condxND}), (\ref{ppp}) and (\ref{qqq})   we have
\begin{equation}\label{Iprop1}
\I^{2a+i}(\theta_{x-j})=\theta_{x+i+j},\quad \I^{2a+i}(\sigma_{x-j})=-\sigma_{x+i+j+2},
\end{equation}
\begin{equation}\label{IpropND}
\I^{2a+i}(N_{x-j-s,m-s})=D_{x+m+i+j,m-s},\quad \I^{2a+i}(D_{x-j-s,m-s})=N_{x+m+i+j,m-s},
\end{equation}
\begin{equation}\label{invpq}
\I^{2a+i}\left(p(x-j)\right)=p(x+i+j+m),\quad \I^{2a+i}\left(q(x-j)\right)=(-1)^{\binom{m_1+m_3+2}{2}+1}q(x+i+j+m).
\end{equation}
We finally need the following two technical Lemmas.
\begin{lemma}\label{lgp2} Let $\Psi_j$, $j=1,\ldots, m$, rational functions with, at most, simple poles. Assume that
\begin{enumerate}
\item for all polynomial $p$, the function $L_p(x)=\sum_{j=1}^m(-1)^jp(\theta_{x-j})\Psi_j(x)$ is a polynomial;
\item if $M(x)=\sum_{j=1}^m(-1)^j\Psi_j(x+j)$, then $\I^{a+b}(M)=-M$, $h=1,\ldots , m$.
\end{enumerate}
Then $M$ is also a  polynomial. If, in addition, we assume that the degree of $L_{x^g}$ is at most $2g+\deg (L_1)$, then $M$ is a polynomial of degree at most $\deg (L_1)$.
\end{lemma}
This Lemma is an update of Lemma 3.4 of \cite{ddI2}, which it can not be applied directly to the situation of Krall-Hahn polynomials since the addends in the functions $M_h$ (\ref{defMh}) are not polynomials in general (they may have poles at numbers of the form $-(a+b+1+j)/2$, $j\in \NN$). We prove it in the Appendix.

\begin{lemma}\label{lemaP}
Let $Y_u, u\in U,$ and $Z_u, u\in V,$ be nonzero polynomials such that $\deg Y_u=u$ and $\deg Z_u=u$. For real numbers $a, N,$ consider the rational function $P$ defined by
\begin{equation}\label{PPP}
P(x)=  \frac{\left|
  \begin{array}{@{}c@{}lccc@{}c@{}}
    & &&\hspace{-2.5cm}{}_{j=1,\ldots , m_1+m_3} \\
    \dosfilas{N_{x-2j+1,m-2j}D_{x-1,2j-2}Y_{u}(\theta_{x-2j+1}) }{u\in U} \\
    \dosfilas{ Z_{u}(\theta_{x-2j+1})}{u\in V}
  \end{array}\hspace{-.5cm}\right|}{p(x)q(x)},
\end{equation}
where $p$ and $q$ are the polynomials (\ref{ppp}) and (\ref{qqq}), respectively.
If $v-u+a+N+1\neq0$ for $u\in U, v\in V$, then $P$ in \eqref{PPP} is a polynomial of degree
\begin{equation}\label{degP}
d=2\left(\sum_{u\in U, V}u-\binom{m_1}{2}-\binom{m_3}{2}\right).
\end{equation}
\end{lemma}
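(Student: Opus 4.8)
The plan is to prove both assertions — polynomiality of $P$ and the degree formula \eqref{degP} — by extracting the manifest factors from the numerator determinant, which I call $\Delta(x)$, and then studying its zeros and its leading term separately. Two preliminaries are worth isolating. \emph{(a)} Each entry of $\Delta$ in column $j$ has a degree in $x$ independent of $j$: by \eqref{xi11} a row $u\in U$ contributes degree $(m-2j)+(2j-2)+2u=m-2+2u$ and a row $v\in V$ contributes degree $2v$; hence $\Delta$ is a genuine polynomial (the Pochhammer indices $m-2j$, $2j-2$ are $\ge 0$ for $1\le j\le m_1+m_3$) and $\deg\Delta\le m_1(m-2)+2\sum_{U,V}u$. \emph{(b)} From \eqref{ppp}--\eqref{qqq}, $\deg p=4\binom{m_1}{2}$ and, using $\sum_{i=1}^{\lfloor M/2\rfloor}(m-4i+1)=\binom{M}{2}$ with $M=m_1+m_3$ and $m=2M$, $\deg q=\binom{m_1+m_3}{2}$.

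\emph{Polynomiality: $p(x)q(x)\mid\Delta(x)$.} For the factor $p$: by \eqref{xi11} each factor of $p$ is a product of linear forms with consecutive integer arguments, and at each of its zeros an entire block of the prefactors $N_{x-2j+1,m-2j}$, $D_{x-1,2j-2}$ occurring in the rows $u\in U$ vanishes. Using the telescoping identities \eqref{condxND}, one checks that at a zero of $p$ of multiplicity $\mu$ the $m_1$ rows indexed by $U$ are non-zero in at most $m_1-\mu$ columns; expanding $\Delta$ by multilinearity along those rows shows that the corresponding linear form divides $\Delta$ to order $\mu$, and summing over the zeros gives $p\mid\Delta$. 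For the factor $q$: here I would use the involution $\I^{2a}$ of \eqref{inv} and the identities \eqref{Iprop1}, \eqref{IpropND}, \eqref{invpq}, which record how $\theta$, $\sigma$, the $N$'s and $D$'s, and $p$ and $q$ transform under $\I^{2a}$. Combining them, $\Delta$ — paired with its companion obtained by reversing the columns and interchanging $N\leftrightarrow D$ — obeys an $\I^{2a}$-skew-symmetry, which forces $\Delta$ to vanish at each zero of every factor $\sigma_{x-m+2i+j}$ of $q$ to the order prescribed by that factor's multiplicity in $q$; hence $q\mid\Delta$. The exact multiplicity bookkeeping in this step is the first delicate point; it is of the same type as the proof of Lemma~\ref{lpqR} and of the corresponding lemma in \cite{ddI2}, and it is natural to organize it by induction on $m_1+m_3$, expanding $\Delta$ along its last column (whose Pochhammer prefactors are the largest).

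\emph{Degree.} The naive bound in (a) is not sharp. Normalizing the column-$j$ entry of each row by the leading power of $x$ of that row, $\Delta=\bigl(\textstyle\prod_{u\in U}c_u\prod_{v\in V}c_v\bigr)\,x^{\,m_1(m-2)+2\sum_{U,V}u}\det A(x)$, where $c_u,c_v$ are the leading coefficients of $Y_u,Z_v$ and each entry of the $(m_1+m_3)\times(m_1+m_3)$ matrix $A(x)$ has the confluent form $1+\sum_{k\ge1}p^{(i)}_k(j)\,x^{-k}$ with $p^{(i)}_k$ a polynomial in the column index $j$ of degree exactly $k$; in particular, for the $U$-rows the potentially degree-$2k$ contributions coming from the binomial terms $\binom{m-2j}{2}$, $\binom{2j-2}{2}$ in the sums of Pochhammer shifts cancel, a point that must be verified. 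A determinant of such a matrix of size $M$ starts at order $x^{-\binom{M}{2}}$, so $\deg\Delta\le m_1(m-2)+2\sum_{U,V}u-\binom{m_1+m_3}{2}$; subtracting $\deg p+\deg q$ and simplifying via $\binom{m_1+m_3}{2}=\binom{m_1}{2}+\binom{m_3}{2}+m_1m_3$ gives $\deg P\le d$. For equality one computes the coefficient of $x^{-\binom{M}{2}}$ in $\det A(x)$: it is a Wronskian/Vandermonde-type product which, up to a non-zero constant and the factor $\prod_U c_u\prod_V c_v$, factors into pairwise differences internal to $U$ and to $V$ — automatically non-zero since the $u$'s (resp.\ $v$'s) are distinct — times cross factors that turn out to be exactly the numbers $v-u+a+N+1$, $u\in U$, $v\in V$. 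These do not vanish by hypothesis, so the coefficient of $x^{d}$ in $P$ is non-zero and $\deg P=d$.

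\emph{Main obstacle.} The two genuinely delicate steps are the multiplicity bookkeeping in the $\I^{2a}$-symmetry argument that produces $q\mid\Delta$, and the verification that the top $\binom{m_1+m_3}{2}$ coefficient-layers of $\Delta$ vanish while the next one is governed by $\prod_{u\in U,\,v\in V}(v-u+a+N+1)$ — which is precisely why the non-vanishing hypothesis appears in the statement. Both are of the same flavour as estimates already carried out for $\Omega_1^{a,\F}$ in \cite{ddI2}, and I would set up the write-up so as to reuse those computations as much as possible; for the special values of $a,N$ where the various zeros collide, the conclusion then follows by a polynomial-identity argument.
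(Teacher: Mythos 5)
Your preliminary bookkeeping is sound: the column-independence of the entry degrees, $\deg p=4\binom{m_1}{2}$, $\deg q=\binom{m_1+m_3}{2}$, and the arithmetic showing that a degree drop of $\binom{m_1+m_3}{2}$ in the numerator determinant, after division by $p\,q$, gives exactly the bound \eqref{degP}. But everything that constitutes the actual content of Lemma \ref{lemaP} is announced rather than proved: the divisibility of the numerator by $q$, the cancellation of the potentially degree-$2k$ (in $j$) terms in the $x^{-k}$ layers of your matrix $A(x)$, and the identification of the lowest surviving coefficient with a nonzero constant times $\prod_{u\in U,v\in V}(v-u+a+N+1)$ --- each is flagged with ``one checks'', ``must be verified'', ``turn out to be'' and then deferred. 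These are precisely the steps where the hypothesis $v-u+a+N+1\neq0$ and the specific structure of $N_{x,j}$, $D_{x,j}$ enter, so as written this is a plan, not a proof. Note also that the paper does not re-derive the lemma at all: it is obtained as a particular instance of Lemma 5.1 of \cite{ddI2} (see also \cite{dudh}); if your intention is to ``reuse those computations'', the honest route is to exhibit that specialization explicitly rather than to sketch a new derivation.

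Beyond incompleteness, the mechanism you propose for $q\mid\Delta$ is doubtful. The factors $\sigma_{x-m+2i+j}$ in \eqref{qqq} recur for different values of $i$ (the inner ranges of $j$ overlap), so $q$ has linear factors with multiplicity greater than one; skew-invariance of a polynomial under a single reflection forces vanishing at the fixed point only to first (odd) order, so an $\I^{2a}$-skew-symmetry of $\Delta$ ``paired with its companion'' cannot by itself produce the required multiplicities. The mechanism in \cite{dudh} and in Lemma 5.1 of \cite{ddI2} is different: at a zero of $\sigma_{x-m+2i+j}$ several of the arguments $\theta_{x-2r+1}$ coincide in pairs, which (after tracking the prefactors $N_{x-2r+1,m-2r}D_{x-1,2r-2}$) makes groups of columns linearly dependent to a computable order, and it is this count of colliding pairs that must be matched against the multiplicity of the factor in $q$; your proposal never performs that count. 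Likewise, the claimed Vandermonde-type evaluation of the bottom coefficient layer --- the only place the cross factors $v-u+a+N+1$ can appear --- is asserted without computation. Until these two points are carried out, neither the polynomiality of $P$ nor the exactness (as opposed to the upper bound) of the degree \eqref{degP} is established.
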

This Lemma is a particular instance of Lemma 5.1 of \cite{ddI2} (see also \cite{dudh}).

\medskip

We are now ready to proof Theorem \ref{Th2}.

\begin{proof}[Proof of the Theorem \ref{Th2}] First we will prove that $M_h(x), h=1,\ldots,m_1,$ in \eqref{Mhx} can be written as
\begin{equation}\label{MhxN1}
M_h(x)=\frac{1}{2}\sum_{j=1}^{m_1+m_3}(-1)^{h+j}\xi_{x,m-2j+1}^1H(x+2j-1)\left(\Omega_1^{a,\F}(x+2j-1)\right)_{h,j},
\end{equation}
while $M_{2m_1+h}(x), h=1,\ldots,m_3,$ can be written as
\begin{equation}\label{MhxN2}
M_{2m_1+h}(x)=\frac{1}{2}\sum_{j=1}^{m_1+m_3}(-1)^{h+j+m_1}\xi_{x,m-2j+1}^3H(x+2j-1)\left(\Omega_1^{a,\F}(x+2j-1)\right)_{m_1+h,j}.
\end{equation}
We will only prove \eqref{MhxN1}. The case \eqref{MhxN2} is similar using Proposition \ref{Prop2}. First observe from the definition of $M_h(x)$ in \eqref{Mhx} the strong dependence on expressions of the form $\frac{\left(\Omega^{a,\F}(x)\right)_{h,j}}{\Omega^{a,\F}(x)}$. Using Proposition \ref{Prop2} these expressions only depend on $\frac{\left(\Omega_1^{a,\F}(x)\right)_{h,j}}{\Omega_1^{a,\F}(x)}$ and $\frac{\left(\Omega_2^{a,\F}(x)\right)_{h,j}}{\Omega_2^{a,\F}(x)}.$ Now using Corollary \ref{coro1} and the property
$$
\xi_{x,m-2j+1}^1\frac{\left(\Omega_1^{a,\F}(x)\right)_{h,j}}{\Omega_1^{a,\F}(x)}=\xi_{x,m-2j}^1\frac{\left(\Omega_2^{a,\F}(x)\right)_{h,j}}{\Omega_2^{a,\F}(x)},\quad h=1,\ldots,m_1,
$$
we see that can group every two consecutive addends of \eqref{Mhx} into one expression, i.e.
\begin{align*}
M_h(x)=&\frac{1}{2}\sum_{j=1}^{m_1+m_3}(-1)^{h+j}\xi_{x,m-2j+1}^1\left(\Omega_1^{a,\F}(x+2j-1)\right)_{h,j}\times\\
\qquad \qquad \qquad &\times\left(\frac{\sigma_{x+2j-1-\frac{2m_1-1}{2}}R(x+2j-1)+\sigma_{x+2j-\frac{2m_1-1}{2}}R(x+2j)}{\Omega_1^{a,\F}(x+2j-1)}\right).
\end{align*}
From \eqref{EQR2} we get \eqref{MhxN1}. Using \eqref{xi11}, \eqref{condxND}, \eqref{HH2} and Lemma \ref{lemaP}, we can write $M_h$ as
\begin{align}\label{MhxN1aux}
M_h(x)=&\frac{1}{2}\sum_{j=1}^{m_1+m_3}(-1)^{j+h}\frac{N_{x,m-2j}D_{x+2j-2,2j-2}\sigma_{x-m_1-m_3+2j}}{p(x+2j-1)q(x+2j-1)}\times\\
\nonumber&\qquad\times\left|
  \begin{array}{@{}c@{}lccc@{}c@{}}
    & &&\hspace{-2cm}{}_{u\neq h, r\neq j} \\
    \dosfilas{D_{x+2j-2,2r-2}N_{x+2j-2r,m-2r}Y_{u}(\theta_{x+2j-2r}) }{u\in U}\\
    \dosfilas{ Z_{u}(\theta_{x+2j-2r})}{u\in V}
  \end{array}\hspace{-.5cm}\right|.
\end{align}

Now let us prove the three hypotheses \eqref{ass0}, \eqref{ass1} and \eqref{ass2}. The first one is trivial from the definition of $S(x)$ in \eqref{SSx} since $S(x)\Omega^{a,\F}(x)=\sigma_{x-\frac{m-1}{2}}R(x)$ and from Lemma \ref{lpqR} we know that $R(x)$ is indeed a polynomial. For the second \eqref{ass1} it is enough to see that $\I^{2a}(M_h(x))=-M_h(x)$ where $\I$ is the involution defined by \eqref{inv} (see the proof of Lemma 5.2 of \cite{ddI2}). Using \eqref{Iprop1}, \eqref{IpropND}, \eqref{invpq}  and the definition of $M_h(x)$ in \eqref{MhxN1aux}, we have
\begin{align*}
\I^{2a}(M_h(x))&=\frac{1}{2}\sum_{j=1}^{m_1+m_3}\frac{(-1)^{j+h}}{(-1)^{\binom{m_1+m_3+2}{2}}}\frac{D_{x+m-2j,m-2j}N_{x,2j-2}\sigma_{x+m_1+m_3-2j+2}}{p(x-2j+m+1)q(x-2j+m+1)}\times\\
&\qquad\times\left|
  \begin{array}{@{}c@{}lccc@{}c@{}}
    & &&\hspace{-2cm}{}_{u\neq h, r\neq j} \\
    \dosfilas{D_{x+m-2j,m-2r}N_{x-2j+2r,2r-2}Y_{u}(\theta_{x-2j+2r}) }{u\in U}\\
    \dosfilas{ Z_{u}(\theta_{x-2j+2r})}{u\in V}
  \end{array}\hspace{-.5cm}\right|\\
  &=\frac{1}{2}\sum_{j=1}^{m_1+m_3}\frac{(-1)^{j+h+m_1+m_3-1}}{(-1)^{\binom{m_1+m_3+2}{2}}}\frac{D_{x+2j-2,2j-2}N_{x,m-2j}\sigma_{x-m_1-m_3+2j}}{p(x+2j-1)q(x+2j-1)}\times\\
&\qquad\times\left|
  \begin{array}{@{}c@{}lccc@{}c@{}}
    & &&\hspace{-2cm}{}_{u\neq h, r\neq j} \\
    \dosfilas{D_{x+2j-2,m-2r}N_{x-m+2j-2+2r,2r-2}Y_{u}(\theta_{x-m+2j-2+2r}) }{u\in U}\\
    \dosfilas{ Z_{u}(\theta_{x-m+2j-2+2r})}{u\in V}
  \end{array}\hspace{-.5cm}\right|\\
  &=\frac{1}{2}\frac{(-1)^{m_1+m_3-1}(-1)^{\binom{m_1+m_3-1}{2}}}{(-1)^{\binom{m_1+m_3+2}{2}}}\sum_{j=1}^{m_1+m_3}(-1)^{j+h}\frac{D_{x+2j-2,2j-2}N_{x,m-2j}\sigma_{x-m_1-m_3+2j}}{p(x+2j-1)q(x+2j-1)}\times\\
&\qquad\times\left|
  \begin{array}{@{}c@{}lccc@{}c@{}}
    & &&\hspace{-2cm}{}_{u\neq h, r\neq j} \\
    \dosfilas{D_{x+2j-2,2r-2}N_{x+2j-2r,m-2r}Y_{u}(\theta_{x+2j-2r}) }{u\in U}\\
    \dosfilas{ Z_{u}(\theta_{x+2j-2r})}{u\in V}
  \end{array}\hspace{-.5cm}\right|\\
  &=(-1)^{(m_1+m_3)(m_1+m_3+1)+3}M_h(x)=-M_h(x).
\end{align*}
Note that we renamed the index $j$ in the second step and that we interchanged all of the columns in the determinant, thus the corresponding change of sign. Observe also that as a consequence of Lemma \ref{lgp2} we have that $M_h(x)$ are indeed polynomials.

For the third \eqref{ass2} it is enough to see (see again the proof of Lemma 5.2 of \cite{ddI2}) that
\begin{equation}\label{3eq}
\I^{2a-1}\left(\sigma_{x-\frac{m-1}{2}}R(x)\right)+\sigma_{x+\frac{m+1}{2}}R(x+m)=0.
\end{equation}
First we claim that if $R$ is a polynomial satisfying \eqref{EQR2}, then $R$ is invariant under the action of $\I^{2a-m-1}$. If that is true, then it is easy to check that
\begin{equation*}\label{relR}
\I^{2a+i}\left(R(x-j)\right)=R(x+m+i+j+1).
\end{equation*}
In particular, for $i=-1$ and $j=0$ we have $R(-x-2a)=R(x+m)$. Using this and \eqref{Iprop1} then it is straightforward to see that \eqref{3eq} holds.

For the claim, assume that \eqref{EQR2} holds for every $x$. Applying $\I^{2a-m-1}$ to this identity and using \eqref{Iprop1} we have
\begin{equation}\label{eqdd}
\sigma_{x-\frac{m-1}{2}}\I^{2a-m-1}\left(R(x)\right)+\sigma_{x-1-\frac{m-1}{2}}\I^{2a-m-1}\left(R(x+1)\right)=-\I^{2a-m-1}\left(H(x)\Omega_1^{a,\F}(x)\right).
\end{equation}
Now from \eqref{Om1}, \eqref{condxND} and \eqref{HH2} we have that
$$
H(x)\Omega_1^{a,\F}(x)=\frac{\sigma_{x-m_1-m_3-1}}{p(x)q(x)}\left|
  \begin{array}{@{}c@{}lccc@{}c@{}}
    & &&\hspace{-2.5cm}{}_{j=1,\ldots , m_1+m_3} \\
    \dosfilas{ N_{x-2j+1,m-2j}D_{x-1,2j-2}Y_{u}(\theta_{x-2j+1}) }{u\in U} \\
    \dosfilas{Z_{u}(\theta_{x-2j+1})}{u\in V}
  \end{array}\hspace{-.5cm}\right|,
$$
and therefore, using \eqref{Iprop1}, \eqref{IpropND} and \eqref{invpq}, we have
\begin{align*}
\I^{2a-m-1}&\left(H(x)\Omega_1^{a,\F}(x)\right)\\&=\frac{\sigma_{x-m_1-m_3}}{(-1)^{\binom{m_1+m_3+2}{2}}p(x-1)q(x-1)}\left|
  \begin{array}{@{}c@{}lccc@{}c@{}}
    & &&\hspace{-2.5cm}{}_{j=1,\ldots , m_1+m_3} \\
    \dosfilas{ D_{x-2,m-2j}N_{x+2j-m-2,2j-2}Y_{u}(\theta_{x-m+2j-2}) }{u\in U} \\
    \dosfilas{Z_{u}(\theta_{x-m+2j-2})}{u\in V}
  \end{array}\hspace{-.5cm}\right|\\
  &=\frac{\sigma_{x-m_1-m_3}(-1)^{\binom{m_1+m_3}{2}}}{(-1)^{\binom{m_1+m_3+2}{2}}p(x-1)q(x-1)}\left|
  \begin{array}{@{}c@{}lccc@{}c@{}}
    & &&\hspace{-2.5cm}{}_{j=1,\ldots , m_1+m_3} \\
    \dosfilas{ D_{x-2,2j-2}N_{x-2j,m-2j}Y_{u}(\theta_{x-2j}) }{u\in U} \\
    \dosfilas{Z_{u}(\theta_{x-2j})}{u\in V}
  \end{array}\hspace{-.5cm}\right|\\
&=-H(x-1)\Omega_1^{a,\F}(x-1).
\end{align*}
Note that we interchanged all of the columns in the determinant, thus the corresponding change of sign. Using the same identity \eqref{EQR2} for $x=x-1$ and subtracting it to \eqref{eqdd} we get
\begin{align*}
\sigma_{x-\frac{m-1}{2}}&\left(R(x)-\I^{2a-m-1}\left(R(x)\right)\right)+\sigma_{x-1-\frac{m-1}{2}}\left(R(x-1)-\I^{2a-m-1}\left(R(x-1)\right)\right)=0.
\end{align*}
If we call $Q(x)=\sigma_{x-\frac{m-1}{2}}\left(R(x)-\I^{2a-m-1}\left(R(x)\right)\right)$ then the above equality becomes $Q(x)+Q(x-1)=0$ for every $x$. The solution is given by $Q(x)=Q(0)(-1)^x$, but $Q$ is a polynomial, so the only possibility is to have $Q(0)=0$ and consequently $Q(x)=0$ for every $x$. That means, since $\sigma_x\neq 0$, that $R(x)=\I^{2a-m-1}\left(R(x)\right)$, i.e. $R$ is invariant under the action of $\I^{2a-m-1}$.

Finally, the order of $D_{S}$ in \eqref{Dq2} is given by $2\deg P_S$. This can be proved in a similar way as it was proven in Theorem 6.2 of \cite{ddI2}, now using Lemma \ref{lgp2}. Observe that $H(x)\Omega_1^{a,\F}(x)=\sigma_{x-m_1-m_3+1}P(x)$, where $P$ is the polynomial defined by \eqref{PPP}. Therefore $\deg \left(H(x)\Omega_1^{a,\F}(x)\right)=d+1$, where $d$ is given by \eqref{degP}. From \eqref{EQR2} we have that $\deg R=d$. On the other hand, from \eqref{ass2} we have that $2\deg P_S-1=\deg R+1=d+1$. Therefore $2\deg P_S=d+2$. Using  \eqref{relF} and \eqref{degP} we have after easy computations that
that
\begin{align*}
d+2&=2\left(\sum_{u\in U,V}u-\binom{m_1}{2}-\binom{m_3}{2}+1\right)=2\left(\sum_{f\in F}f-\binom{k_1}{2}-\binom{k_3}{2}+1\right).
\end{align*}
\end{proof}

\section{Examples}
\

\noindent
1. Consider the example with $F$ and $G$ given by
$$
F=\{1,2,\ldots,k\},\quad G=\emptyset.
$$
Then $U=I(F)=\{k\}, V=I(G)=\emptyset$ and $m_1=1, m_3=0$. In \cite{ddI2}, the rational function from which we produce the difference operator is 
\begin{equation}\label{iii}
S(x)=\sigma_{x-1/2}\frac{P(x)}{\Omega ^{a,\F} (x)},
\end{equation}
where $P$ is the polynomial $P(x)=R_k^{-a,-a,2a+N}(\theta_{x-1}+2a)R_k^{-a,-a,2a+N}(\theta_{x-2}+2a)$. This gives a difference operator of order $4k+2$ (according to Theorem \ref{cori1}). Theorem \ref{Th2} provides a better choice for the rational function $S$. Indeed, the function $\Omega_1^{a,\F}(x)$ in \eqref{Om1} is given by
$$
\Omega_1^{a,\F}(x)=R_k^{-a,-a,2a+N}(\theta_{x-1}+2a),
$$
while the function $H(x)$ in \eqref{HH2} is given by $H(x)=\sigma_x$. Therefore we have to find a solution of the difference equation \eqref{EQR2}, i.e.
$$
\sigma_{x-1/2}R(x)+\sigma_{x+1/2}R(x+1)=\sigma_xR_k^{-a,-a,2a+N}(\theta_{x-1}+2a).
$$
Surprisingly enough, the solution in this case is given also by a dual Hahn polynomial. Indeed,
$$
R(x)=\frac{2a+N+1}{2(2a+N-k+1)}R_k^{-a,-a-1,2a+N+1}\left((x-1)(x+2a-1)\right).
$$
The new function $S$ is then defined by $S(x)=\sigma_{x-1/2}R(x)/\Omega^{a,\F} (x)$ (compare with (\ref{iii}) above). With this we have $M_1(x)=M_2(x)=\sigma_{x+1}/2$ and therefore $\tilde M_1(x)=\tilde M_2(x)=1/2$. From Theorem \ref{Th2} we have that the order of the new difference operator is $2k+2$, instead of $4k+2$ as before.

\medskip

\noindent
2. Consider the example with $F$ and $G$ given by
$$
F=\emptyset,\quad G=\{1,2,\ldots,k\}.
$$
Proceeding as before, we have now
$
\Omega_1^{a,\F}(x)=R_k^{-a,-a,-2-N}(\theta_{x-1}+2a),
$
and the solution of \eqref{EQR2} is given again by a dual Hahn polynomial. Indeed,
$$
R(x)=\frac{N+1}{2(N+k+1)}R_k^{-a,-a-1,-1-N}\left((x-1)(x+2a-1)\right).
$$
Again, from Theorem \ref{Th2} we have that the order of the difference operator is $2k+2$, instead of $4k+2$ (see Theorem \ref{cori1}).

\medskip

\noindent
3. Consider the example with $F$ and $G$ given by
$$
F=\{1,2,\ldots,k_1\},\quad G=\{1,2,\ldots,k_2\}.
$$
Then $U=I(F)=\{k_1\}$, $V=I(G)=\{k_2\}$ and $m_1=m_3=1$. The function $\Omega_1^{a,\F}(x)$ in \eqref{Om1} is given by
$$
\Omega_1^{a,\F}(x)=
\begin{vmatrix}
\xi_{x-1,3}^1R_{k_1}^{-a,-a,2a+N}(\theta_{x-1}+2a)&\xi_{x-3,1}^1R_{k_1}^{-a,-a,2a+N}(\theta_{x-3}+2a)\\
R_{k_2}^{-a,-a,-2-N}(\theta_{x-1}+2a)&R_{k_2}^{-a,-a,-2-N}(\theta_{x-3}+2a)
\end{vmatrix},
$$
while the rational function $H(x)$ in \eqref{HH2} is given by
$$
H(x)=\frac{D_{x-1,3}}{N_{x-3,1}}.
$$
Therefore, using  \eqref{condxND}, we have for $R$ the difference equation \eqref{EQR2}
\begin{equation}\label{spmde}
\sigma_{x-3/2}R(x)+\sigma_{x-1/2}R(x+1)=\begin{vmatrix}
N_{x-1,2}R_{k_1}^{-a,-a,2a+N}(\theta_{x-1}+2a)&D_{x-1,2}R_{k_1}^{-a,-a,2a+N}(\theta_{x-3}+2a)\\
R_{k_2}^{-a,-a,-2-N}(\theta_{x-1}+2a)&R_{k_2}^{-a,-a,-2-N}(\theta_{x-3}+2a)
\end{vmatrix},
\end{equation}
where $N_{x,j}$ and $D_{x,j}$ are the polynomials defined by \eqref{xi11}.

With this we have
\begin{align*}
M_1(x)&=M_2(x)=\frac{1}{2}\left(N_{x,2}R_{k_2}^{-a,-a,-2-N}(\theta_{x-2}+2a)-D_{x+2,2}R_{k_2}^{-a,-a,-2-N}(\theta_{x+2}+2a)\right),\\
M_3(x)&=M_4(x)=\frac{1}{2}\left(-D_{x,2}R_{k_1}^{-a,-a,2a+N}(\theta_{x-2}+2a)+N_{x+2,2}R_{k_1}^{-a,-a,2a+N}(\theta_{x+2}+2a)\right).
\end{align*}
From Theorem \ref{Th2} we have that the order of the difference operator is $2k_1+2k_2+2$, instead of $4k_1+4k_2+2$ (see Theorem \ref{cori1}).

Unlike the previous two cases, we have not been able to find the general solution of (\ref{spmde}) for arbitrary $k_1, k_2$ positive integers. Nevertheless, it is possible to compute $R(x)$ for particular values of $k_1$ and $k_2$.

%\section*{Appendix}

\appendix

\section{}

\begin{proof}[Proof of Lemma \ref{lpqR}] Using \eqref{Om1}, \eqref{xi11}, \eqref{condxND} and Lemma \ref{lemaP}, we have
\begin{align*}
H(x)\Omega_1^{a,\F}(x)&=\frac{\sigma_{x-m_1-m_3+1}\left(D_{x-1,m-1}\right)^{m_1}}{p(x)q(x)\left(N_{x-m+1,1}\right)^{m_1}}
\left|
  \begin{array}{@{}c@{}lccc@{}c@{}}
    & &&\hspace{-2.5cm}{}_{j=1,\ldots , m_1+m_3} \\
    \dosfilas{\displaystyle \frac{N_{x-2j+1,m-2j+1}}{D_{x-2j+1,m-2j+1}}Y_{u}(\theta_{x-2j+1}) }{u\in U} \\
    \dosfilas{ Z_{u}(\theta_{x-2j+1})}{u\in V}
  \end{array}\hspace{-.5cm}\right|\\
  &=\frac{\sigma_{x-m_1-m_3+1}}{p(x)q(x)}
\left|
  \begin{array}{@{}c@{}lccc@{}c@{}}
    & &&\hspace{-2.5cm}{}_{j=1,\ldots , m_1+m_3} \\
    \dosfilas{\displaystyle N_{x-2j+1,m-2j}D_{x-1,2j-2}Y_{u}(\theta_{x-2j+1}) }{u\in U} \\
    \dosfilas{ Z_{u}(\theta_{x-2j+1})}{u\in V}
  \end{array}\hspace{-.5cm}\right|=\sigma_{x-m_1-m_3+1}P(x),
\end{align*}
where $P$ is given by \eqref{PPP}. According to Lemma \ref{lemaP}, $P$ is a polynomial. Therefore $H(x)\Omega_1^{a,\F}(x)$ is a polynomial of degree $d+1$ where $d$ is given by \eqref{degP}.
\end{proof}

\begin{proof}[Proof of Lemma \ref{lgp2}]
For a complex number $s$, if $s$ is a pole of $\Psi _j$, write $c_j\equiv c_j(s)$ for its residue. Otherwise $c_j(s)=0$. We assume the following claim (which we prove later).

\textsl{Claim 1.} Let $s$ be a pole of $\Psi _j$ (hence $c_j\not =0$). Then there is a unique $l$, $l\not =j$, $1\le l\le m$, such that $s=-(a+b+1)/2+(j+l)/2$, $s$ is also a pole of $\Psi _l$ and $(-1)^jc_j+(-1)^lc_l=0$. Write now
$$
E=\left\{(j,l):\mbox{$1\le j<l \le m,$\; and\; $\displaystyle \frac{-(a+b+1)+j+l}{2}$\; is a pole of $\Psi_j$ (and so of $\Psi_l$)}\right\}.
$$
Using the claim, the function $M(x)=\sum_{j=1}^m(-1)^j\Psi_j(x+j)$ can be written in the form
$M(x)=P(x)+Q(x)$, where $P$ is a polynomial and
\begin{equation}\label{spm11}
Q(x)=\sum_{(j,l)\in E}(-1)^{j}c_j\left(\frac{1}{x+j+\frac{a+b+1-(j+l)}{2}}-\frac{1}{x+l+\frac{a+b+1-(j+l)}{2}}\right).
\end{equation}
A simple computation gives $\I^{a+b}(Q)=Q$, where $\I$ is the involution defined by \eqref{inv}. Since $\I^{a+b}(M)=-M$, we have
$$
-(P+Q)=-M=\I^{a+b}(M)=\I^{a+b}(P)+\I^{a+b}(Q)=\I^{a+b}(P)+Q.
$$
And so $2Q=-P-\I^{a+b}(P)$. Since $-P-\I^{a+b}(P)$ is a polynomial, (\ref{spm11}) implies that $Q=0$, and hence $M$ is a polynomial.

We next prove the Claim 1. Indeed, let $s$ be a pole of $\Psi _j$ and write
$$
I_j=\{l: \theta_{s-j}=\theta_{s-l},\; l\not =j,\; 1\le l\le m\}.
$$
Let $p$ be a polynomial satisfying that $p(\theta_{s-j})=1$ and $p(\theta_{s-l})=0$ for $l\not \in I_j$. Taking into account the hypotheses (1) of this Lemma  we have that
$
L_p(x)=\sum_{i=1}^m(-1)^{i}p(\theta_{x-i})\Psi_i(x)
$
is a polynomial. Since the poles of $\Psi_j$ are, at most, simple, we can write
\begin{equation*}
L_p(x)=R(x)+\frac{1}{x-s}\sum_{i=1}^m(-1)^ic_ip(\theta_{x-i}),
\end{equation*}
where $R$ is a rational function which is analytic at $s$. Since $L_p(x)$ is a polynomial, we have that
$\sum_{i=1}^m(-1)^ic_ip(\theta_{x-i})$ has to vanish at $s$. Hence
\begin{equation*}\label{pm00}
0=\sum_{i=1}^m(-1)^ic_ip(\theta_{s-i})=(-1)^jc_j+\sum_{l\in I_j}(-1)^lc_l.
\end{equation*}
Since $\theta_x$ is a polynomial of degree two, $I_j$ has to be either the empty set or a singleton. Since $c_j\not =0$,
we deduce that $I_j$ is just a singleton, and so there is a unique $l$, $l\not =j$, $1\le l\le m$, such that $\theta_{s-j}=\theta_{s-l}$, and then $(-1)^jc_j+(-1)^lc_l=0$; hence $c_l\not=0$ and $s$ is also a pole of $\Psi _l$.
The equation $\theta_{s-j}=\theta_{s-l}$ then gives that $s=-(a+b+1)/2+(j+l)/2$. This proves the Claim 1, and hence the first part of the Lemma.

Assume now that the degree of $L_{x^g}$ is at most $2g+\deg (L_1)$. We have already proved that $M$ is a polynomial, hence we prove that its degree is at most $\deg (L_1)$.
To simplify the notation write $d_g=\deg (L_{x^g})$. By hypothesis, we have $d_g\le 2g+d_0$. 

For $j=1,\ldots , m$, denote by $n_j$ and $d_j$ the degrees of the numerator and the denominator of $\Psi_j$, respectively, and define
$
d=\max\{n_j-d_j,\; j=1,\ldots, m\}.
$
We can then write
\begin{equation}\label{1pm0}
\Psi_j(x)=P_j(x)+R_j(x),
\end{equation}
where $P_j$ is a polynomial of degree at most $d$, and $R_j$ is a rational function satisfying that the degree of its numerator is less than the degree of its denominator. We have already proved that
$\sum_{j=1}^m(-1)^jR_j(x+j)=0$ (it coincides with the rational function $Q$ (\ref{spm11})), hence
\begin{equation}\label{1pm1}
M(x)=\sum_{j=1}^m(-1)^j\Psi_j(x+j)=\sum_{j=1}^m(-1)^jP_j(x+j).
\end{equation}
We write
\begin{equation}\label{1pm2}
P_j(x)=\sum_{i=0}^{d}a_i^{j}x^i,
\end{equation}
and finally define
\begin{equation}\label{defc}
\gamma ^l_i=\sum_{j=1}^m (-1)^jj^la_i^j, \quad 0\le l, \quad 0\le i\le d.
\end{equation}
We next assume the following claim.

\textit{Claim 2}. $\gamma_i^{i-l}=0$ for $i=l,\ldots, d$ and $l=d_0+1,\ldots , d$.

The second part of the Lemma follows by inserting the expansion of $P_j$ (\ref{1pm2}) in (\ref{1pm1}) and using the Claim 2:
\begin{align*}
M(x)&=\sum_{j=1}^m(-1)^{j}\sum_{i=0}^{d}a_i^{j}(x+j)^i=\sum_{j=1}^m(-1)^{j}\sum_{i=0}^{d}a_i^{j}\sum_{l=0}^i\binom{i}{l}x^lj^{i-l}\\
&=\sum_{l=0}^{ d}x^l\sum_{i=l}^{d}\binom{i}{l}\sum_{j=1}^m(-1)^{j}j^{i-l}a_i^{j}=\sum_{l=0}^{ d}x^l\sum_{i=l}^{d}\binom{i}{l}\gamma_i^{i-l}=\sum_{l=0}^{d_0}x^l\sum_{i=l}^{d}\binom{i}{l}\gamma_i^{i-l}.
\end{align*}
Claim 2 is equivalent to the following statement:
\begin{equation}\label{1pm4}
\mbox{$\gamma_{d-h+i}^{i}=0$ for $h=i,\ldots, d-d_0-1$ and $i=0,\ldots , d-d_0-1$.}
\end{equation}
We prove (\ref{1pm4}) by induction on $r=d-d_0-1$. Using (\ref{1pm0}), we have
\begin{align*}
L_{x^g}(x)&=\sum_{j=1}^m(-1)^j\theta _{x-j}^g\Psi_j(x)\\
&=\sum_{j=1}^m(-1)^j\theta _{x-j}^gP_j(x)+\sum_{j=1}^m(-1)^j\theta _{x-j}^gQ_j(x)=\sum_{j=1}^m(-1)^j\theta _{x-j}^gP_j(x)+\tilde Q_g(x),
\end{align*}
where $\tilde Q_g=\sum_{j=1}^m(-1)^j\theta _{x-j}^gQ_j(x)$ is certain rational function. Since $L_{x^g}$ is a polynomial, we have that $\tilde Q_g$ has also to be a polynomial, and since  each $Q_j$ is a  rational function satisfying that the degree of its numerator is less than the degree of its denominator, then the degree of $\tilde Q_g$ is at most $2g-1$ (for $g=0$ this means that $Q_0=0$).
Taking into account that $\theta_{x-j}=(x-j)^2+u(x-j)$, where $u=a+b+1$, we can write (after some computations):
\begin{align*}
\sum_{j=1}^m(-1)^j&\theta _{x-j}^gP_j(x)=(-u)^g\sum_{l=0}^{2g}(-1)^lx^l\sum_{v=\max \{0,l-g\}}^g(-u)^{-v}\binom{g}{v}\binom{g+v}{l}\sum_{j=1}^m(-1)^jj^{g+v-l}P_j(x).
\end{align*}
This gives
\begin{equation}\label{1pm3}
L_{x^g}(x)=Q_g(x)+(-u)^g\sum_{l=0}^{2g}(-1)^lx^l\sum_{v=\max \{0,l-g\}}^g(-u)^{-v}\binom{g}{v}\binom{g+v}{l}\sum_{j=1}^m(-1)^jj^{g+v-l}P_j(x).
\end{equation}
Assume now that $r=0$, i.e., $d=d_0+1$. For $g=0$ equation (\ref{1pm3}) gives (recall that $Q_0=0$)
$L_{1}(x)=\sum_{j=1}^m(-1)^jP_j(x)$.
Since $\deg (L_1)=d_0$, we have from (\ref{1pm2}) and (\ref{defc}) that $\gamma_{d_0+1}^0=0$. This is precisely (\ref{1pm4}) for $r=0$.

The induction hypothesis then says $\gamma_{d-h+i}^{i}=0$ for $h=i,\ldots, r-1$ and $i=0,\ldots , r-1$. We have to prove that also
$\gamma_{d-r+l}^{l}=0$ for $l=0,\ldots, r$. To do that, we compute the coefficient $\zeta_{2g-r+d}$ of the power $x^{2g-r+d}$
of $L_{x^g}$ for $g=0,\ldots , r$. Indeed, using (\ref{1pm2}), (\ref{defc}), (\ref{1pm3}), taking into account that each $P_i$ and $Q_g$ have degrees at most $d$ and $2g-1$, respectively,  we have that
$$
\zeta_{2g-r+d}=(-1)^gu^g\sum_{l=2g-r}^{2g}(-1)^l\sum_{v=\max \{0,l-g\}}^g(-u)^{-v}\binom{g}{v}\binom{g+v}{l}\gamma_{2g-r+d-l}^{g+v-l}.
$$
The induction hypothesis says that for $v<g$, $\gamma_{2g-r+d-l}^{g+v-l}=0$, and then
$$
\zeta_{2g-r+d}=\sum_{l=2g-r}^{2g}(-1)^l\binom{2g}{l}\gamma^{2g-l}_{2g-r+d-l}.
$$
Since the degree of $L_{x^g}$ is at most $2g+d_0$ and $2g-r+d=2g-d+d_0+1+d=2g+d_0+1$, we deduce that $\zeta_{2g-r+d}=0$ for $g=0,\ldots ,r$. Writing $x_h=\gamma^{h}_{h-r+d}$, this gives the following homogeneous system of $r+1$ equations
$$
\sum_{h=0}^{r}(-1)^h\binom{2g}{h}x_h=0.
$$
Since $\det ((-1)^h\binom{2g}{h})_{g,h=0,\ldots ,r}=(-2)^{\binom{r}{2}}\not =0$, we deduce that $\gamma^{h}_{h-r+d}=0$, $h=0,\ldots ,r$. This completes the proof of the Claim 2.
\end{proof}

\end{document}